\renewcommand\nomgroup[1]{%
  \item[\bfseries
  \ifstrequal{#1}{A}{Sets and Indices}{%
  \ifstrequal{#1}{B}{Parameters}{%
  \ifstrequal{#1}{D}{Dual Variables}{%
  \ifstrequal{#1}{C}{Variables}{}}}}%
]}
\pgfplotsset{compat=1.7}
\definecolor{julia1}{rgb}{0.0,0.6056031611752245,0.9786801175696073}
\definecolor{julia2}{rgb}{0.8888735002725198,0.43564919034818994,0.2781229361419438}
\definecolor{julia3}{rgb}{0.2422242978521988,0.6432750931576305,0.3044486515341153}
\definecolor{julia4}{rgb}{0.7644401754934356,0.4441117794687767,0.8242975359232758}
\definecolor{julia5}{rgb}{0.6755439572114057,0.5556623322045815,0.09423433626639477}
\definecolor{julia6}{rgb}{4.821181644776295e-7,0.6657589812923561,0.6809969518707945}
\definecolor{julia7}{rgb}{0.930767491919665,0.3674771896571412,0.5757699667547829}
\definecolor{maincolor}{HTML}{032F99}
\definecolor{BrickRed}{HTML}{cb4154}
\definecolor{color_ConvD}{rgb}{0.8588,0.3333,0.3333}
\definecolor{color_darkgreen}{rgb}{0.10,0.30,0.00}
\definecolor{color_darkblue}{rgb}{0.00,0.25,0.50}
\definecolor{color_darkred}{rgb}{0.80,0.13,0.00}
\theoremstyle{thmstyleone}%
\newtheorem{theorem}{Theorem}
\newtheorem{lemma}{Lemma}
\theoremstyle{thmstyletwo}%
\theoremstyle{thmstylethree}%
\newtheorem{definition}{Definition}%
\begin{document}

\title[On the Efficiency of Energy Markets with Non-Merchant Storage]{On the Efficiency of Energy Markets with Non-Merchant Storage}


\author[1]{\fnm{Linde} \sur{Fr\"{o}lke}}\email{linde.frolke@tutanota.com}
\equalcont{These authors contributed equally to this work.}

\author*[1]{\fnm{Eléa} \sur{Prat}}\email{emapr@dtu.dk}
\equalcont{These authors contributed equally to this work.}

\author[2,1]{\fnm{Pierre} \sur{Pinson}}\email{p.pinson@imperial.ac.uk}

\author[1]{\fnm{Richard M.} \sur{Lusby}}\email{rmlu@dtu.dk}

\author[1]{\fnm{Jalal} \sur{Kazempour}}\email{jalal@dtu.dk}

\affil*[1]{\orgname{Technical University of Denmark}, \orgaddress{\city{Kgs. Lyngby}, \postcode{2800}, \country{Denmark}}}
\affil[2]{\orgname{Imperial College London}, \orgaddress{\city{London}, \postcode{SW7 2DB}, \country{U.K.}}}



\abstract{Energy market designs with non-merchant storage have been proposed in recent years, with the aim of achieving optimal market integration of storage. 
In order to handle the time-linking constraints that are introduced in such markets, existing works commonly make simplifying assumptions about the end-of-horizon storage level, e.g., by imposing an exogenous level for the amount of energy to be left for the next time horizon.
This work analyzes market properties under such assumptions, as well as in their absence.
We find that, although they ensure cost recovery for all market participants, these assumptions generally lead to market inefficiencies.
Therefore we consider the design of markets with non-merchant storage without such simplifying assumptions. 
Using illustrative examples, as well as detailed proofs, we provide conditions under which market prices in subsequent market horizons fail to reflect the value of stored energy. 
We show that this problem is essential to address in order to preserve market efficiency and cost recovery.
Finally, we propose a method for restoring these market properties in a perfect-foresight setting.}

\keywords{non-merchant storage, energy market design, passive storage, market efficiency}



\maketitle

\section{Introduction}\label{sec:1}


The need for large-scale energy storage to balance intermittent and stochastic renewables in future energy systems has become apparent. The IEA Energy Outlook predicts that utility-scale battery storage will increase from less than 20 GW in 2020 to over 3 000 GW by 2050 \cite{iea2021world}. 
It remains an open question how to best integrate storage in energy markets.
The urgency of this question is illustrated by a recent order from the Federal Energy Regulatory Commission that requires system operators to facilitate market participation of electric storage, and to provide fair compensation for the provided services, given the physical and operational characteristics of these assets \cite{FERC2018}. 

In \cite{Singhal2020Pricing}, three possible market designs for storage systems in electricity markets are identified. They differ in their representation of storage in the objective function and constraints of the market clearing.
In the first design, there is no separate bidding format for storage systems, so that storage operators must submit price-quantity bids in the same way as loads and generators. This approach is common in current energy markets.
The second market design additionally includes storage state-of-energy constraints to the market clearing, ensuring feasibility of the storage dispatch.
In the corresponding bidding format, storage owners must therefore submit relevant operational parameters.
We use the term \textit{merchant storage} for a storage submitting price bids in the market, which is the case in these two market designs. 
One major disadvantage of merchant storage is that it can have negative effects on social welfare \cite{Hartwig2016Impact, Sioshansi2014When}, depending on its market power\footnote{Strategic bidding of storage can reduce or increase social welfare, depending on operational circumstances. Negative effects are shown in \cite{Hartwig2016Impact, Sioshansi2014When}, while \cite{Gu2022Market} shows positive effects in an oligopolistic market.}.

To mitigate these effects, the concept of \textit{non-merchant} or \textit{passive} storage in energy markets has been investigated in recent years \cite{Taylor2014Financial,Munoz2017Financial, weibelzahl2018effects,jiang2023duality}. It is the third market design option in \cite{Singhal2020Pricing}. 
A non-merchant storage does not submit price bids, but its operation is co-optimized with generation and loads to achieve the highest social welfare.
An appealing property of this set-up is that it is comparable to network-aware markets, which are prevalent in practice, e.g., in the U.S. markets. Similar to the way that power lines are included in the market to perform spatial arbitrage, a non-merchant storage performs temporal arbitrage. Thus, the storage is scheduled to achieve more efficient use of energy systems.
Non-merchant storage can be a public asset owned by the system or network operator, but it could also be privately owned. 

Time-linking constraints are inevitably part of optimal dispatch problems for storage.
With merchant storage, these time-linking constraints can be considered in the individual optimal bidding problem of the storage operator. The challenges associated with time-linking constraints in this case have been studied extensively for both price-taker \cite{Salas2018Benchmarking,Loehndorf2010Optimal} and price-maker storage \cite{Wang2017LookAhead, Mohsenianrad2016Coordinated, Ding2017Optimal, Badoual2021Learning}.
These challenges are transferred to the market operator in the case of non-merchant storage. 
Energy markets are cleared sequentially for subsequent finite time horizons, but due to intertemporal constraints, the different market horizons do influence one another. This raises the question of how to account for the subsequent market clearing in the current clearing, and vice versa.
For an overview of market design approaches for markets with intertemporal constraints we refer to \cite{Zhao2020}.


As the market-clearing problem is solved with a finite horizon, the relation with the following market horizons is embedded in the choice of a final state of energy for the storage system. Existing works on market clearing with non-merchant storage often neglect or simplify relations between subsequent market horizons, leading to `myopic' decision-making regarding the state of energy of the storage at the end of horizon. This is pointed out as a research gap by the authors of \cite{Sioshansi2021Energy}.  
A non-myopic, optimal final state of energy is challenging to determine, because it depends on both current and future market horizons. 
The non-merchant storage literature largely bypasses this problem using the following common simplifying assumptions.
Most works do not impose any constraints on the final state of energy in the storage \cite{Taylor2014Financial,Munoz2017Financial,jiang2023duality,weibelzahl2018effects}. 
Another common simplification is to enforce that the storage state of energy is equal at the start and end of a horizon \cite{Crespo2016Impact, Zhang2020Energy, Vespermann2021Access}. 
A possible state-of-energy profile resulting from either of these myopic approaches is plotted in blue in Figure \ref{fig:intrp}, whereas an example optimal profile given perfect information about future market intervals is depicted in black. 
Such simplifying assumptions can lead to suboptimal use of the storage, loss of social welfare, and market inefficiencies, which we will show in this paper. These effects are sometimes overlooked in existing works that consider market properties within a single horizon \cite{jiang2023duality}.
As an alternative, a `future-aware' method, in red in Figure~\ref{fig:intrp}, could approach the perfect profile if informed end-of-horizon decisions were made.

\begin{filecontents*}{introfig.dat}
0 2   1         2
1 1.75 1.25     2.25
2 3.75   2.5    4.25
3 4.2   2       5
4 4.5 1         5
5 3.5 1         3.9
6 2.25 1        2.25
7 3.1   2.25    3.7
8 2.5 1         2.75
9 2.25   1.25         2
10 2   1.5      2.25
\end{filecontents*}

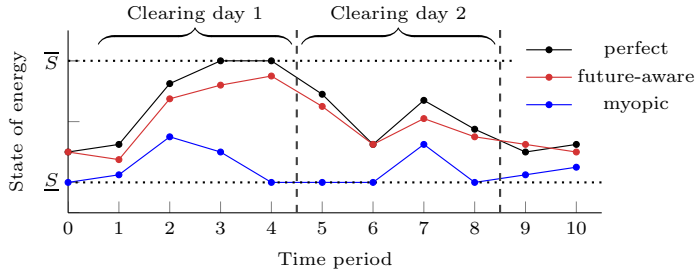
\begin{figure}
\centering
    \begin{tikzpicture}[thick,scale=1,font=\scriptsize]
    \pgfplotsset{ymin=0, ymax=6, xmin=0, xmax=10.5}
    \begin{axis}[xtick = {0,1,2,3,4,5,6,7,8,9,10},
    extra y ticks={1,5},
    extra y tick labels={$\underline{S}$, $\overline{S}$},
    ytick = {0,3,5}, 
    yticklabels={,,},
    xlabel near ticks, 
    ylabel near ticks, 
    ylabel={State of energy }, 
    xlabel={Time period}, 
    axis y line*=left,
    axis x line*=bottom,
    label style={font=\footnotesize},
    tick label style={font=\footnotesize}, 
    legend style={draw=white, name = ConvD_leg_pos, font=\footnotesize, fill=white, anchor = north east, at={(1.2,1)} }, 
    width=8.6cm, height=4cm]
        \addplot [black, line width=0.15mm, mark=*, mark size=1.1, name path=A] 
        table [x index = 0, y index = 3] {introfig.dat};
        \addlegendentry{perfect}
        \addplot [color_ConvD!120, line width=0.15mm, mark=*, mark size=1.1, name path=A] 
        table [x index = 0, y index = 1] {introfig.dat};
        \addlegendentry{future-aware}
        \addplot [blue, line width=0.15mm, mark=*, mark size=1.1, name path=A] 
        table [x index = 0, y index = 2] {introfig.dat};
        \addlegendentry{myopic}
        \addplot[thick, samples=50, name path=Soverbar, black, dotted] coordinates {(0,5)(11.0,5)};
        \addplot[thick, samples=50, name path=Soverbar, black, dotted] coordinates {(0,1)(11.0,1)};
        \addplot[thick, samples=50, name path=daysplit, black!75, dashed] coordinates {(4.5,0)(4.5,10)};
        \addplot[thick, samples=50, name path=daysplit2, black!75, dashed] coordinates {(8.5,0)(8.5,10)};
        \path (axis cs:0, 0) coordinate (origin);
        \path (axis cs:3.77, 1.1) coordinate (0);
        \path (axis cs:6.23, 1.1) coordinate (2);
        \path (axis cs:0.6, 5.55) coordinate (D1_1);
        \path (axis cs:4.4, 5.55) coordinate (D1_2);
        \path (axis cs:4.6, 5.55) coordinate (D2_1);
        \path (axis cs:8.4, 5.55) coordinate (D2_2);
        \path (axis cs:8.6, 5.55) coordinate (D3_1);
        \path (axis cs:10.4, 5.55) coordinate (D3_2);
        
    \end{axis}
    \begin{scope}[decoration={calligraphic brace, amplitude=6pt}]
        \draw[thick,decorate] (D1_1) -- (D1_2) node[midway,above=1ex]{\footnotesize Clearing day 1};
        \draw[thick,decorate] (D2_1) -- (D2_2) node[midway,above=1ex]{\footnotesize Clearing day 2};
    \end{scope}
    \end{tikzpicture}
\caption{Illustrative example: the effect of (simplifying) assumptions on the state of energy of the storage over several market clearing horizons. }
\label{fig:intrp}
\end{figure}

Several previous works do consider setting a future-aware end-of-horizon level for the non-merchant storage, but this has only been done using rolling-horizon methods \cite{Zhao2020, Hua2019Pricing, Chen2021Pricing}.
However, problems can even occur when the final state of energy is set perfectly, both in rolling-horizon or other kinds of look-ahead markets. They result from the fact that subsequent optimization horizons are not aware of losses and gains incurred by market participants in past horizons \cite{Hua2019Pricing}. 
As a result, it can happen that the storage improves social welfare, while not recovering its cost.
Solutions to this problem have been proposed in a rolling-horizon setting, and with a focus on ramping constraints \cite{Hua2019Pricing, Guo2021Pricing}. 

The end-of-horizon issue for non-merchant storage is systematically overlooked, as illustrated by many studies that make simplifying assumptions on the final state of energy. 
To draw attention to this, we aim to convey two key messages in a rigorous manner. 
Our first message is that it is important to set a future-aware end-of-horizon storage level, instead of making simplifying assumptions. 
Second, we show that a new problem arises in the absence of the simplifying assumptions, namely, that market prices in subsequent market horizons may fail to reflect the value of storage. This problem is essential to address, as the market can fail to provide dispatch-following incentives and cost recovery for the storage, as we will show in this work. 
Previous works have touched upon these issues, but only in a rolling-horizon setting, and with a focus on ramping constraints. 
Instead of using a rolling-horizon approach, we consider a more general formulation where the final storage level is constrained to take a certain value, to be determined for each market clearing. 

Our first contribution is to provide intuitive understanding of our two key messages using two illustrative examples. 
Our second and main contribution is the analysis of market properties under simplifying assumptions on the storage system final state of energy, as well as in the absence of these assumptions.
To support the first message, we prove that such common simplifying assumptions ensure cost recovery for the storage, but lead to market inefficiencies. 
Related to the second message, we provide sufficient conditions in Theorem~\ref{thrm:main} under which the market prices in subsequent market horizons will fail to reflect the value of storage, and thus fail to provide dispatch-following incentives and possibly cost recovery for the storage. 
A final contribution of this work is a proposed market-clearing procedure with non-merchant storage which is efficient and ensures cost recovery, given perfect foresight about future market-clearing parameters.

The remainder of this paper is organized as follows. In Section~\ref{sec:2}, we introduce the set-up of this work, including the market-clearing model and pricing scheme. Section~\ref{sec:3} analyzes the impact of common assumptions on market properties. In Section~\ref{sec:4}, we show why and under which conditions dispatch-following incentives and cost recovery for the storage are not ensured, even when the final state of energy is set to its optimal value. In Section~\ref{sec:5}, we propose a method for ensuring that the desirable market properties hold, in the perfect foresight setting. Finally, Section~\ref{sec:conclusion} concludes the paper, followed by several appendices.

\section{Market Model and Definitions} \label{sec:2}

We formulate the market-clearing problem with a non-merchant storage and detail our assumptions. Hereafter, \textit{market horizon} refers to a set of time periods covered by a given market clearing. For example, a day-ahead market would have a market horizon of 24 hours. The next market horizon would include the following 24 hours. We disregard what happens in other markets trading for the same day, e.g., intraday and balancing. We first introduce the storage model in Section \ref{sec:stormod} before proceeding to the market formulation in Section \ref{sec:marketmod}. Next, we formulate the dual problem and discuss the pricing mechanism in Section~\ref{sec:dual}. Finally, we define market properties in Section~\ref{sec:markprop}. Nomenclature and the rules followed for notations are available in Appendix~\ref{ap:nomencl}.

\subsection{Storage Model}\label{sec:stormod}
We assume there is a single storage system. This is equivalent to including multiple storage systems, as we do not consider grid constraints. 
The storage system is described by
\begin{subequations} \label{prob:storage}
\allowdisplaybreaks
\begin{align}
    \label{eq:stor_bound} & 0 \leq e_t \leq \overline{E}, \quad  \forall t \in \mathcal{T}  && :(\underline{\nu}_t, \overline{\nu}_t) \\
    \label{eq:stor_bal_t} & e_t = e_{t-1} + b_t , \quad \forall t \in \mathcal{T}\setminus \{1\}  && :(\rho_t)\\
    \label{eq:stor_bal_1} & e_1 = E^{\mathrm{init}} + b_1 && :(\rho_1) \\
    \label{eq:stor_end} & e_\text{H} = E^{\mathrm{end}} && :(\xi)\,.
\end{align}
\end{subequations}

Here and in the following, dual variables are indicated within parentheses on the right of the corresponding constraint.
For time step $t$ in the market-clearing horizon $\mathcal{T}$, the state of energy of the storage system is a decision variable denoted by $e_t$. It is bounded between 0 and the storage capacity $\overline{E}$ in \eqref{eq:stor_bound}.
The variable $b_t$ represents the energy charged ($b_t>0$) or discharged ($b_t<0$). The storage energy balance is described by \eqref{eq:stor_bal_t} and \eqref{eq:stor_bal_1}, where $E^{\mathrm{init}}$ is the initial state of energy. We omit charging and discharging losses, so that a single variable for charging and discharging suffices. The inclusion of charging and discharging losses would change the dual problem slightly, but as will be argued later on, it would not alter our main message. We disregard charging and discharging limits, since they make the formulations and derivations heavier by introducing additional dual variables, while they do not affect conclusions. Our storage-system model is stylized as we focus on the time-linking aspect in \eqref{eq:stor_bal_t}. The representation of the storage system would be more detailed in a practical market-clearing model. For a discussion on the storage model assumptions, we refer to \cite{Pozo2014Unit} and \cite{garifi2018control}.
The final state of energy for $t=H = |\mathcal{T}|$ can be set to a predefined value $E^{\mathrm{end}}$ with \eqref{eq:stor_end}.
Constraints \eqref{eq:stor_bal_1} and \eqref{eq:stor_end} generalize what is found in the literature, where the initial level is often assumed to be equal to zero and the final level is unconstrained, or where $E^{\mathrm{end}} = E^{\mathrm{init}}$.

The best value of $E^{\mathrm{end}}$ could be determined in many different ways, e.g., rolling horizon, online learning, reinforcement learning, perfect foresight, etc. It is a topic in itself and we will not address it here. In Section~\ref{sec:4}, we consider the situation where $E^{\mathrm{end}}$ is chosen optimally, i.e., with perfect foresight of future market horizons. 
Our formulation is general in the way that it includes various approaches proposed in the literature as special cases. Alternatives to imposing the final level with \eqref{eq:stor_end} could be to have a rolling-horizon set-up or to include an estimated value of stored energy in the objective function of the market clearing \cite{Ding2017Optimal}. Similar end-of-horizon problems also arise in those settings. 

\subsection{Market-Clearing Formulation}\label{sec:marketmod}
In the market-clearing problem with non-merchant storage, the storage does not submit price-quantity bids. However, storage operational constraints~\eqref{eq:stor_bound}-\eqref{eq:stor_bal_1}, and in some cases \eqref{eq:stor_end}, are included in the clearing. The storage is used as an asset to move cheap energy between time periods, similar to the way that power lines can be included in the market clearing to move energy between nodes or zones. As we focus on the time-linking effects of storage, network constraints are excluded for simplicity. The addition of network constraints would, however, not alter our main results. Storage can also modify the dispatch between nodes of the system and have an impact on line congestions, which is studied in \cite{weibelzahl2018effects}. 

We consider two versions of the market clearing with non-merchant storage, namely, a free and a constrained market. In the \textit{constrained} market, \eqref{eq:stor_end} is included in the optimization problem, which gives the market operator the option to enforce an optimal end-of-horizon storage level by considering future market horizons. In the \textit{free} market on the other hand, this constraint is omitted. 
The constrained market-clearing problem for the time periods $t \in \mathcal{T}$, denoted by $\textbf{C}(\mathcal{T})$, is
\begin{subequations} \label{prob:simplified_dispatch}
\allowdisplaybreaks
\begin{align}
    \label{eq:mc_obj}\max_{\mathbf{x}} \quad & \sum_{t \in \mathcal{T}}  \left ( \sum_{l \in \mathcal{L}} U_{lt} d_{lt} - \sum_{g \in \mathcal{G}} C_{gt} p_{gt}  \right )\\
    \label{eq:energy_bal}
    \text{s.t.} \quad & \sum_{l \in \mathcal{L}} d_{lt} + b_t - \sum_{g \in \mathcal{G}} p_{gt} = 0 , \quad  \forall t \in \mathcal{T}   \quad :(\lambda_t)\\
    \label{eq:gen_bound}& 0 \leq p_{gt} \leq \overline{P}_{gt}, \quad  \forall g \in \mathcal{G}, t \in \mathcal{T}   \quad :(\underline{\mu}_{gt}, \overline{\mu}_{gt})\\
    \label{eq:load_bound}& 0 \leq d_{lt} \leq \overline{D}_{lt}, \quad  \forall l \in \mathcal{L}, t \in \mathcal{T}   \quad :(\underline{\chi}_{lt}, \overline{\chi}_{lt})\\
    &\eqref{eq:stor_bound}-\eqref{eq:stor_end} \, .
\end{align}
\end{subequations}
Here, $\mathbf{x}$ is the vector grouping all primal variables, which are indicated with Roman lowercase letters. The generators are indexed by $g \in \mathcal{G}$, and loads by $l \in \mathcal{L}$. The production of $g$ at $t$ is a decision variable $p_{gt}$, and similarly $d_{lt}$ gives the demand of $l$ at $t$. The individual bid and offer prices are $U_{lt}$ and $C_{gt}$. These correspond to the demand utility and the generation cost under the assumption that participants bid truthfully.
The objective function~\eqref{eq:mc_obj} is to maximize the social welfare, calculated as the difference between total demand utility and total generation cost over the given time horizon.
Constraint \eqref{eq:energy_bal} enforces balance between production and demand at each time $t$, including the charged or discharged energy.
Production and demand limits are enforced in \eqref{eq:gen_bound} and \eqref{eq:load_bound}, with maximum levels $\overline{P}_{gt}$ and $\overline{D}_{lt}$, respectively.

The free market-clearing problem for the time periods $t \in \mathcal{T}$, denoted by $\textbf{F}(\mathcal{T})$, is obtained from the constrained problem by removing \eqref{eq:stor_end}.

\subsection{Dual Problem and Pricing}\label{sec:dual}

The dual problem $\textbf{CD}(\mathcal{T})$ of \eqref{prob:simplified_dispatch} is 
\begin{subequations} \label{prob:simplified_dispatch_dual}
\allowdisplaybreaks
\begin{align}
    \min_{\boldsymbol{\zeta}} \quad & \sum_{t \in \mathcal{T}}  \left ( \sum_{g \in \mathcal{G}} \overline{P}_{gt}\overline{\mu}_{gt} + \sum_{l \in \mathcal{L}} \overline{D}_{lt} \overline{\chi}_{lt} + \overline{E} \overline{\nu}_t  \right ) + E^{\mathrm{init}} \rho_1 - E^{\mathrm{end}} \xi \\
    \label{eq:dual_gens}
    \text{s.t.} \quad & C_{gt} - \underline{\mu}_{gt} + \overline{\mu}_{gt} - \lambda_t = 0, \quad  \forall g \in \mathcal{G}, t \in \mathcal{T}\\
    \label{eq:dual_loads}
    & - U_{lt} - \underline{\chi}_{lt} + \overline{\chi}_{lt} + \lambda_t = 0, \quad  \forall l \in \mathcal{L}, t \in \mathcal{T}\\
    \label{eq:rholambda}
    & - \rho_t + \lambda_t = 0, \quad  \forall t \in \mathcal{T}\\
    \label{eq:dual_timelink}
    & - \underline{\nu}_t + \overline{\nu}_t + \rho_t - \rho_{t+1} = 0, \quad  \forall t \in \mathcal{T}\setminus \{H\}\\
    \label{eq:dual_timelink_H}
    & - \underline{\nu}_H + \overline{\nu}_H + \rho_H - \xi = 0 \\
    & \underline{\mu}_{gt}, \overline{\mu}_{gt}, \underline{\chi}_{lt}, \overline{\chi}_{lt}, \underline{\nu}_t, \overline{\nu}_t \geq 0 \, .
\end{align}
\end{subequations}
The vector $\boldsymbol{\zeta}$ groups all dual variables.
We include the dual variables associated with the non-negativity constraints of the primal variables as they are used in our subsequent proofs.
We choose the signs of the free variables $\rho_1$ and $\xi$ in a way that will facilitate our derivations. 
The dual problem $\textbf{FD}(\mathcal{T})$ of the free market clearing is obtained from \eqref{prob:simplified_dispatch_dual} by removing $\xi$ and the terms related to it.

A thorough analysis of the dual problem is available in~\cite{jiang2023duality}. One useful interpretation of $\rho_t$ is that it represents the marginal value of having an additional unit of energy stored at the end of hour $t$. Constraint \eqref{eq:rholambda} implies $\rho_t = \lambda_t$ for all $t$. Therefore, we use $\rho$ and $\lambda$ interchangeably in the remainder of this article. The inclusion of charging and discharging losses would change relation \eqref{eq:rholambda} by a factor. 

We consider a pricing system where all market participants, including the storage system, buy and sell energy at the hourly market price $\lambda_t$.
Alternatively, the participants could be paid rents evaluated from other dual variables, as is done in \cite{Taylor2014Financial,Munoz2017Financial, jiang2023duality}. 
The two payment systems are equivalent, as proven in \cite{jiang2023duality} and \cite{Taylor2014Financial} for the case that $E^{\mathrm{init}} = E^{\mathrm{end}} = 0$. 

Constraints \eqref{eq:dual_timelink} and \eqref{eq:dual_timelink_H} establish a relation between the value of $\rho_t$ in subsequent time periods. These constraints show that the market prices $\lambda_t$ in two subsequent hours can only differ if the storage state of energy is at a bound. This can be seen as intertemporal congestion, similar to the way line congestion in network-aware markets may lead to differences in nodal or zonal prices. 

We define the concept of a \textit{time zone} similar to a spatial zone in network-aware markets. A time zone is the longest possible set of consecutive time steps with the same market price. We illustrate the concept of a time zone in Figure \ref{fig:timezone}.

\begin{filecontents*}{timezone.dat}
5.0 1 1
3.25 2 4
3.25 3 5
5.0 4 1.5
5.0 5 2.5
5.0 6 1
2.5 7 4
2.5 8 3.5
\end{filecontents*}

\begin{figure}[!ht]
\center
    \begin{tikzpicture}[thick,scale=1,font=\footnotesize]
    \pgfplotsset{ymin=0, ymax=5.5, xmin=0, xmax=8.5}
    \begin{axis}[
    xtick = {1,2,3,4,5,6,7,8},
    extra y ticks={1,5},
    extra y tick labels={$\underline{S}$, $\overline{S}$},
    ytick = {0,3,5}, 
    yticklabels={,,},
    ylabel near ticks, 
    ylabel={$e_t$ [$\si{{\watt\hour}}$]}, 
    axis y line*=left,
    axis x line*=bottom,
    label style={font=\footnotesize},
    tick label style={font=\footnotesize}, 
    legend pos = south west, 
    legend style={draw=white, name = ConvD_leg_pos, font=\footnotesize, 
        legend cell align={left},
        fill=white, 
        at={(0.5, 0.9)},
		anchor=north,legend columns=-1},
    width=8.6cm, height=3.5cm]
        \addplot [color_ConvD!120, line width=0.15mm, mark=*, mark size=1.1, name path=A] 
        table [x index = 1, y index = 2] {timezone.dat};
        \addplot[thick, samples=50, name path=Soverbar, black, dotted] coordinates {(0,5)(9.0,5)};
        \addplot[thick, samples=50, name path=Soverbar, black, dotted] coordinates {(0,1)(9.0,1)};
    \end{axis}
    \end{tikzpicture}
\begin{tikzpicture}[thick,scale=1,font=\footnotesize]
    \pgfplotsset{ymin=0, ymax=5.5, xmin=0, xmax=8.5}
    \begin{axis}[
    xtick = {1,2,3,4,5,6,7,8},
    ytick = {0, 5}, 
    xlabel near ticks, 
    ylabel near ticks, 
    ylabel={Price $\rho_t$ [\euro/Wh]}, 
    xlabel={Time periods $t\in\mathcal{T}$}, 
    axis y line*=left,
    axis x line*=bottom,
    label style={font=\footnotesize},
    tick label style={font=\footnotesize}, 
    legend pos = south west, 
    legend style={draw=none, name = ConvD_leg_pos, font=\footnotesize, 
    legend cell align={left},fill=none}, 
    width=8.6cm, height=3.5cm]
        \addplot [black, line width=0.15mm, mark=*, mark size=1.1, name path=A] 
        table [x index = 1, y index = 0] {timezone.dat};
        \addplot[thick, samples=50, name path=TZ0a, gray!25, opacity=0.5] coordinates {(.75,0)(0.75,10)};
        \addplot[thick, samples=50, name path=TZ0b, gray!25, opacity=0.5] coordinates {(1.25,0)(1.25,10)};
        \addplot[pattern = north east lines, pattern color=red!50]fill between[of=TZ0a and TZ0b];
        \addplot[thick, samples=50, name path=TZ1a, gray!25, opacity=0.5] coordinates {(1.75,0.05)(1.75,10)};
        \addplot[thick, samples=50, name path=TZ1b, gray!25, opacity=0.5] coordinates {(3.25,0.05)(3.25,10)};
        \addplot[pattern = vertical lines, pattern color=green]fill between[of=TZ1a and TZ1b];
        \addplot[thick, samples=50, name path=TZ2a, gray!25, opacity=0.5] coordinates {(3.75,0.05)(3.75,10)};
        \addplot[thick, samples=50, name path=TZ2b, gray!25, opacity=0.5] coordinates {(6.25,0.05)(6.25,10)};
        \addplot[color=gray!30] fill between[of=TZ2a and TZ2b];
        \addplot[thick, samples=50, name path=TZ3a, gray!25, opacity=0.5] coordinates {(6.75,0)(6.75,10)};
        \addplot[thick, samples=50, name path=TZ3b, gray!25, opacity=0.5] coordinates {(9.25,0)(9.25,10)};
        \addplot[pattern = north west lines, pattern color=blue!50]fill between[of=TZ3a and TZ3b];
        \addplot[thick, samples=50, name path=daysplit, black!75, dotted] coordinates {(4.5,0)(4.5,10)};
        \node at (axis cs:4.1,0.75) {$\mathcal{X}^{\mathrm{d1}}$};
        \node at (axis cs:5.35,0.75) {$\mathcal{X}^{\mathrm{d2}}$};
        \path (axis cs:0, 0) coordinate (origin);
        \path (axis cs:3.77, 1.1) coordinate (0);
        \path (axis cs:6.23, 1.1) coordinate (2);
        \path (axis cs:0.6, 5.55) coordinate (D1_1);
        \path (axis cs:4.4, 5.55) coordinate (D1_2);
        \path (axis cs:4.6, 5.55) coordinate (D2_1);
        \path (axis cs:8.4, 5.55) coordinate (D2_2);
    \end{axis}

    \begin{scope}[decoration={calligraphic brace, amplitude=6pt}]
        \draw[thick,decorate] (0) -- (2) node[midway,above=1ex]{$\mathcal{X}$};
        \draw[thick,decorate] (D1_1) -- (D1_2) node[midway,above=1ex]{$\mathcal{T}^{\mathrm{d1}}$};
        \draw[thick,decorate] (D2_1) -- (D2_2) node[midway,above=1ex]{$\mathcal{T}^{\mathrm{d2}}$};
    \end{scope}
    \end{tikzpicture}
\caption{Bottom: Example of market-clearing price dividing the market horizon into four time zones. Notation for sets of time indices used in later sections is introduced. Top: Storage state-of-energy ($e_t$) profile related to the below market price signal. \label{fig:timezone}}
\end{figure}
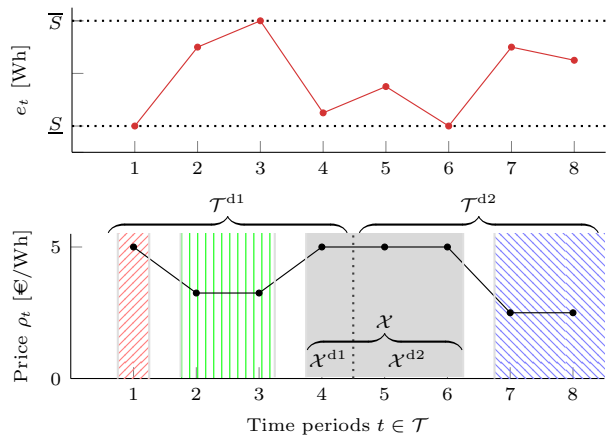

\begin{definition}[Time zone]
A set of time steps $\mathcal{Z}$ with $\min_{t\in\mathcal{Z}} t = z_0$ and $\max_{t\in\mathcal{Z}} t = Z$ is a time zone if and only if 
\begin{enumerate}
    \item $\mathcal{Z}$ only includes consecutive time steps
    \item for all $t\in\mathcal{Z}$ it holds that $\lambda_t = c$ for some constant $c$ 
    \item $\rho_{z_0-1} \neq c$ and $\rho_{Z+1} \neq c$.
\end{enumerate}
Denote the time zone of time step $t$ by $\mathcal{Z}_t$.
\end{definition}

The bottom plot in Figure \ref{fig:timezone} shows four time zones. Within each time zone, the market price $\lambda_t = \rho_t$ is constant. The upper plot in the figure shows an example state-of-energy profile for the storage that can accompany the price profile below. As $\rho$ decreases from $t=1$ to $t=2$, the storage must be at a lower bound at $t=1$, due to constraint \eqref{eq:dual_timelink}. The same constraint implies that when the price increases, e.g., from $t=3$ to $t=4$, the storage must be at an upper bound at $t=3$.

\subsection{Market Properties}\label{sec:markprop}
Here, we define the desirable properties of the market clearing that will be evaluated in the next sections.

Market prices are \textit{dispatch supporting} when no market player desires to unilaterally deviate from the outcomes of the market clearing.
Alternatively, one could say that the market provides \textit{dispatch-following incentives}.
The market is \textit{efficient} if social welfare is maximized and the market provides dispatch-following incentives. 
There is \textit{cost recovery} if the profit of every participant is guaranteed to be non-negative. 
Related works might use other terms to describe similar market properties, e.g., individual rationality.
Another critical market property is \textit{revenue adequacy}, which holds when the market operator has no financial deficit. We do not mention it in the rest of the paper because it is ensured regardless of the assumptions on the final storage level.

In the remainder, we assume that there is perfect competition, in the sense that demands and generators participating in the market bid their true utility and cost. Indeed, we focus on the effects of the storage system on the market properties and disregard the possibility of strategic bidding for the other participants, which would also exist without the storage system.


\section{Market Properties under Common Assumptions on the Final Storage Level} \label{sec:3}

In this section, we analyze the influence that common simplifying assumptions on the final state of energy have on market properties, in particular, cost recovery for the storage system and market efficiency. We evaluate two assumptions.
The first is to not have a constraint on the final level. The market clearing will then schedule the storage in order to maximize the social welfare in the current horizon only, while neglecting future social welfare. As a consequence of not having such a constraint, storage state of energy is guaranteed to be at its lower bound by the end of each market horizon, unless negative prices occur\footnote{If zero market prices occur, there may exist multiple optimal solutions. In principle, it is possible that some of these solutions have a non-empty storage at the end of the market horizon.}.
The second assumption is to assume that storage state of energy is equal at the start and end of a horizon. Often this level is set to zero, which is not necessarily a good choice, as the storage operator could prefer to start the day with some energy available in case the prices are high.
We explore how these assumptions ensure cost recovery over each individual market horizon, but may lead to market inefficiency.

\subsection{Illustrative Example I: Impact of Common Assumptions}\label{sec:sec3a}

In this purely illustrative example, we clear the market for two sequential days of two hours each, identified by the sets $\mathcal{T}^{\mathrm{d1}}$ and $\mathcal{T}^{\mathrm{d2}}$. Two hours is the shortest possible market horizon for which the impact of common assumptions can be illustrated. Similar results can be obtained for longer market horizons. 
The storage capacity is $\overline{E} = 2.5\si{{\watt\hour}}$. 
We consider a single load and two generators, a cheaper one and a more expensive one. The time-dependent inputs are summarized in Table \ref{tab:case-data}. 
The corresponding code is available online \cite{GitHub}.

\begin{table}[ht]
    \caption{Time-dependent inputs for Illustrative Example I of Section \ref{sec:3}}
    \begin{tabular}{cc|cccccc}
     & $t$ & $\overline{D}_{1t} (\si{{\watt\hour}})$ & $\overline{P}_{1t} (\si{{\watt\hour}})$ & $\overline{P}_{2t} (\si{{\watt\hour}})$ & $U_{1t}$ (\si{\euro\per\watt\hour}) & $C_{1t}$ (\si{\euro\per\watt\hour}) & $C_{2t}$ (\si{\euro\per\watt\hour}) \\ \hline
    \multirow{2}{*}{$\mathcal{T}^{\mathrm{d1}}$} & 1 & 0 & 2 & 2 & 12 & 4 & 8  \\
     & 2 & 1 & 2 & 2 & 12 & 5 & 10 \\
    \multirow{2}{*}{$\mathcal{T}^{\mathrm{d2}}$} & 3 & 3 & 2 & 2 & 12 & 2 & 9  \\
     & 4 & 3 & 2 & 2 & 12 & 6 & 11
    \end{tabular}
    \label{tab:case-data}
\end{table}

We look at the profit of the storage system with different assumptions on the initial state of energy $E^{\mathrm{init}}$ and final state of energy $E^{\mathrm{end}}$ on the first day. These results are shown in Table \ref{tab:profit}. In the first case, the initial level is set to 0 and the final level is left free, or equivalently set to zero (if no negative prices occur), which is the most common assumption in the literature. In the second case, the initial and the final levels on each day are equal, which is also a common assumption. The value is set to $1.25\si{{\watt\hour}}$ (half of the capacity). In the third case, the preference of the storage, given perfect information about the second day, is taken into account. The storage starts empty, and the final level on the first day is set to $2.5 \si{\watt\hour}$. The final level on the second day is left free, under the assumption that there is no subsequent market clearing, which is sufficient to show the potential issues. Indeed, in this last case, the storage earns a negative profit on the first day, but this is compensated for by a higher profit on the second day. Note that the storage never obtains a negative daily profit in the other two cases. 

\begin{table}[ht]
    \caption{Profit (\euro) of the storage system for various initial and final state of energy ($\si{{\watt\hour}}$) for {Illustrative Example I}}
    \begin{tabular}{cc|ccc}
                            &
  $t$ &
  \begin{tabular}[c]{@{}c@{}}$E^\text{init}=E^\text{end}$\\ $=0$\end{tabular} &
  \begin{tabular}[c]{@{}c@{}}$E^\text{init}=E^\text{end}$\\ $=1.25$\end{tabular} &
  \begin{tabular}[c]{@{}c@{}}$E^\text{init}=0$ and \\ $E^\text{end}=2.5$\end{tabular} \\ \hline
    \multirow{2}{*}{$\mathcal{T}^{\mathrm{d1}}$}         & 1   & -4                     & -4                      & -10                          \\
             & 2   & 4                      & 4                       & -2.5                         \\
    \multirow{2}{*}{$\mathcal{T}^{\mathrm{d2}}$}         & 3   & -11                    & -11                     & 6                            \\
             & 4   & 11                     & 11                      & 9                            \\ \hline
    Total   $\mathcal{T}^{\mathrm{d1}}$ &     & 0                      & 0                       & -12.5                        \\
    Total   $\mathcal{T}^{\mathrm{d2}}$ &     & 0                      & 0                       & 15                           \\
    Total                   &     & 0                      & 0                       & 2.5                         
    \end{tabular}
    \label{tab:profit}
\end{table}

The total social welfare obtained is equal to $46\euro$ in the first and second cases and $55.5\euro$ in the last case. So not only the profit of the storage is higher in the last case, but the social welfare is too. This difference in social welfare is due to the fact that in the last case, the storage system is used in the first hour of the second day instead of the most expensive generator. In order to charge the storage, the less expensive generator produces more on the first day.

In the following, we generalize these results and show that if the final state of energy is forced to be equal to the initial state of energy or not fixed at all, the daily profit of the storage is never negative. However, these assumptions on the final storage level will most often give a solution that is not optimal for the storage, nor the social welfare.

\subsection{Cost Recovery for the Storage System}
In both market-clearing formulations $\textbf{F}(\mathcal{T})$ and $\textbf{C}(\mathcal{T})$, cost recovery holds for the generators and loads. For the storage system however, cost recovery is not automatically ensured within a single market interval. For example, if the final level is set to be higher than the initial level and the prices are positive and decreasing during the market interval, the storage system pays for charging at the last hour and does not recover that cost in this market interval. However, we show in this section that under certain conditions, cost recovery for the storage within each market interval is ensured. 

\subsubsection{Cost recovery in $\textbf{C}(\mathcal{T})$ with equal initial and final state of energy}
In order to evaluate cost recovery for $\textbf{C}(\mathcal{T})$, we express the profit of the storage as a function of the initial and final state of energy. We use the profit-maximization problem of the storage and strong duality to find this relationship.
Related to the market-clearing model~\eqref{prob:simplified_dispatch}, the profit maximization for the storage is
\begin{subequations} \label{prob:prof}
\allowdisplaybreaks
\begin{align} 
    \label{eq:prof_obj} \max_\mathbf{b,e} \quad & \sum_{t \in \mathcal{T}}  - \lambda_t b_t  \\
    \text{s.t.} \quad & 0 \leq e_t \leq \overline{E}, \quad  \forall t \in \mathcal{T}  & \quad :(\underline{\nu}_t, \overline{\nu}_t) \\
    & e_t = e_{t-1} + b_t , \quad \forall t \in \mathcal{T}, t \neq 1 & \quad :(\rho_t) \\
    & e_1 = E^{\mathrm{init}} + b_1 & \quad :(\rho_1) \\
    & e_H = E^{\mathrm{end}} & \quad :(\xi) \,.
\end{align}
\end{subequations}
The objective of the storage is to maximize its profit \eqref{eq:prof_obj}, considering that it is paid the market price $\lambda_t$ when discharging ($b_t \leq 0$), and it pays the market price $\lambda_t$ when charging ($b_t \geq 0$). The market price is taken as an input. The dual problem of \eqref{prob:prof} is
\begin{subequations}
\allowdisplaybreaks
\begin{align}
    \min_{\boldsymbol{\rho, \nu}} \quad & \sum_{t \in \mathcal{T}} \overline{E} \overline{\nu}_t + E^{\mathrm{init}} \rho_1 - E^{\mathrm{end}} \xi \\ 
    \text{s.t.} \quad & \lambda_t - \rho_t = 0, \quad  \forall t \in \mathcal{T}\\ 
    \label{eq:prof_d2} & -\underline{\nu}_t + \overline{\nu}_t + \rho_t - \rho_{t+1} = 0, \quad  \forall t \in \mathcal{T}, t \neq H\\
    \label{eq:prof_d3} & -\underline{\nu}_H + \overline{\nu}_H + \rho_H - \xi = 0 \\
    & \underline{\nu}_t, \overline{\nu}_t \geq 0 \,.
\end{align}
\end{subequations}
At optimality, strong duality holds, and the values obtained for the primal and dual objective functions are equal (optimal variable values are denoted by an asterisk). The profit at optimality is thus
\begin{equation}
     \sum_{t \in \mathcal{T}}  - \lambda_t b_t^* =  \sum_{t \in \mathcal{T}} \overline{E} \overline{\nu}^*_t + E^{\mathrm{init}} \rho_1^* - E^{\mathrm{end}} \xi^* \,. 
     \label{eq:profit}
\end{equation}

Adding \eqref{eq:prof_d2} for all $t$ to \eqref{eq:prof_d3}, and multiplying by $E^{\mathrm{end}}$, we get
\begin{equation}\label{eq:e_fix_muliply}
     E^{\mathrm{end}} \xi^* =  - \sum_{t \in \mathcal{T}}  E^{\mathrm{end}} \underline{\nu}^*_t + \sum_{t \in \mathcal{T}}  E^{\mathrm{end}} \overline{\nu}^*_t + \rho_1^* E^{\mathrm{end}} \,.
\end{equation}

Substituting \eqref{eq:e_fix_muliply} in the right side of \eqref{eq:profit}, we obtain
\begin{equation}
    \sum_{t \in \mathcal{T}}  - \lambda_t b_t^* =  \sum_{t \in \mathcal{T}} (\overline{E}-E^{\mathrm{end}}) \overline{\nu}^*_t + \sum_{t \in \mathcal{T}} E^{\mathrm{end}} \underline{\nu}^*_t + \rho_1^* (E^{\mathrm{init}} - E^{\mathrm{end}}) \,.
\end{equation}

This equality shows that if the final level is set equal to the initial level, the profit of the storage system is non-negative, due to the non-negativity of $\overline{\nu}_t$ and $\underline{\nu}_t$, and the fact that $\overline{E}\geq E^{\mathrm{end}}$. However, $E^{\mathrm{init}} = E^{\mathrm{end}}$ is a special case. In general, we do not have guarantees that the profit of the storage system is non-negative, and in fact it is straightforward to find counterexamples, as was provided in Section \ref{sec:sec3a}.

\subsubsection{Cost recovery in $\textbf{F}(\mathcal{T})$}

We show here that not having a constraint on the final state of energy is also a special case for which the profit of the storage system is non-negative. For $\textbf{F}(\mathcal{T})$, constraint \eqref{eq:prof_d3} is modified to 
\begin{equation}
      \label{eq:dual_short} -\underline{\nu}_H + \overline{\nu}_H + \rho_H = 0 \,.
\end{equation}
Strong duality from \eqref{eq:profit} simplifies to
\begin{equation}
     \sum_{t \in \mathcal{T}}  - \lambda_t b_t^* =  \sum_{t \in \mathcal{T}} \overline{E} \overline{\nu}^*_t + E^{\mathrm{init}} \rho_1^* \,.
     \label{eq:profit_short}
\end{equation}

Summing all constraints in \eqref{eq:prof_d2} and \eqref{eq:dual_short} for the optimal solution, we get
\begin{equation}\label{eq:multiply}
      \sum_{t \in \mathcal{T}} \overline{\nu}^*_t =  \sum_{t \in \mathcal{T}} \underline{\nu}^*_t - \rho_1^* \,.
\end{equation}
Since $\underline{\nu}^*_t$ is non-negative,
\begin{equation}\label{eq:ineq_nu}
      \sum_{t \in \mathcal{T}} \overline{\nu}^*_t \geq - \rho_1^* \,.
\end{equation}
Multiplying both sides by $E^{\mathrm{init}}$, and using that $\overline{E} \geq E^{\mathrm{init}}$, we derive
\begin{align}\label{eq:ineq_level}
      \overline{E} \sum_{t \in \mathcal{T}} \overline{\nu}^*_t \geq E^{\mathrm{init}} \sum_{t \in \mathcal{T}} \overline{\nu}^*_t \geq - E^{\mathrm{init}} \rho_1^* \,.
\end{align}
We conclude
\begin{equation}\label{eq:ineq_level2}
      \sum_{t \in \mathcal{T}}  - \lambda_t b_t^* = \overline{E} \sum_{t \in \mathcal{T}} \overline{\nu}^*_t + E^{\mathrm{init}} \rho_1^* \geq 0 \,.
\end{equation}
In other words, the profit of the storage system is certainly non-negative when the final state of energy is unconstrained.

The absence of cost recovery over a single market interval is not necessarily an issue: the storage may still recover its cost in the subsequent market intervals. In that sense, it would be more relevant to evaluate cost recovery for the storage over an infinitely long time horizon (or equivalent). However, if market intervals are considered completely separate, cost recovery cannot be ensured. 
Setting the initial and final levels to the same value is a way to overcome this, but it has an impact on other market properties, as we discuss next.

\subsection{Market Efficiency}
We now show that enforcing the initial level to be equal to the final level can pose problems in terms of market efficiency.
We argue that under this simplification, the storage system could have an incentive to unilaterally deviate from the market-clearing outcomes. Indeed, the storage system profit-maximization problem would actually take into account a longer horizon than just one market interval, to best utilize the potential of temporal arbitrage. Ideally, the planning horizon for the owner of the storage system would also correspond to this longer horizon. The storage profit maximization should be evaluated over a longer horizon $\mathcal{T^{\mathrm{long}}}$, where $H^{\mathrm{long}}>H$, with $H=|\mathcal{T}|$ and $H^{\mathrm{long}}=|\mathcal{T^{\mathrm{long}}}|$:
\begin{subequations} \label{prob:profit_T}
\begin{align} 
    \max_\mathbf{b, e} \quad & \sum_{t \in \mathcal{T^{\mathrm{long}}}}  - \lambda_t b_t  \\
    \text{s.t.} \quad & 0 \leq e_t \leq \overline{E}, \quad  \forall t \in \mathcal{T^{\mathrm{long}}} \\
    & e_t = e_{t-1} + b_t , \quad \forall t \in \mathcal{T^{\mathrm{long}}}, t \neq 1 \\
    & e_1 = E^{\mathrm{init}} + b_1 \,.
\end{align}
\end{subequations}
Thus, if the final level at the end of the market interval is not set to a value found in the storage profit-maximization problem, i.e. if $E^{\mathrm{end}}\neq e_H^*$, where $e_H^*$ is an optimal solution for \eqref{prob:profit_T}, the market output will not be optimal for the storage system and the storage will have an incentive to deviate.

The social welfare is also impacted, as shown in Illustrative Example I from Section~\ref{sec:sec3a}. Clearing a set of market intervals together will always give the best value of the social welfare over the entire set of time periods included, since the social welfare is maximized. When we instead clear these market intervals sequentially without setting the final value of the storage properly for every single clearing, the social welfare may be reduced.

In conclusion, it is recommended to allow system operators to set the final storage level to a sensible level, based on information about future market intervals. 
However, doing so brings new challenges. In the following, we analyze this type of market clearing and discuss the duality issues that arise. 

\section{Market Properties of the Multi-Interval Market Clearing} \label{sec:4}

While the previous section focused on market properties within a single market interval, this section establishes several results for the multi-interval case. 
The main aim is to determine the effect of splitting the market clearing in multiple intervals on the optimal primal and dual solutions. 
Ideally, the splitting should not affect the solutions. 
We show that even though the primal solution may be unaffected, this is not necessarily the case for the dual solution.

First, we specify the formulations used in the remainder of this work in Section \ref{sec:def4}, including the definition of the \textit{full-horizon} and \textit{split-horizon} problems. 
In Section \ref{sec:illex} we use an illustrative example to show how solutions to these optimization problems may differ, and why this deserves attention. 
The following sections formalize and generalize the results of this illustrative example. 
In Section \ref{sec:primalequiv}, we show that the primal solutions to these two problems are equivalent. 
However, as will be shown in Section \ref{sec:wrongduals}, market efficiency is not ensured in the split-horizon problem, while it does hold for the full-horizon problem. We provide conditions under which market inefficiencies will occur in the split-horizon problem.  

\subsection{Definition of the Full-Horizon and Split-Horizon Problems}\label{sec:def4}
We consider a horizon $\mathcal{T}$ that consists of two market intervals, assumed to be days. The time periods of first and second days are collected in the respective sets $\mathcal{T}^{\mathrm{d1}}$ and $\mathcal{T}^{\mathrm{d2}}$, where $H = |\mathcal{T}^{\mathrm{d1}}|$ and $H+1$ is the first hour of $\mathcal{T}^{\mathrm{d2}}$. The end of the entire horizon is $T = |\mathcal{T}|$. 
We define the \textit{full-horizon} problem as the market clearing for the entire horizon $\mathcal{T}$. The full-horizon optimization model is given by $\textbf{F}(\mathcal{T})$, as already defined in~Section \ref{sec:marketmod}.

We use the term \textit{split-horizon} problem for the case where we clear the market sequentially for $\mathcal{T}^{\mathrm{d1}}$ and then for $\mathcal{T}^{\mathrm{d2}}$. The split-horizon optimization model is given by $\textbf{C}(\mathcal{T}^{\mathrm{d1}})$ for the first and $\textbf{F}(\mathcal{T}^{\mathrm{d2}})$ for the second day. In the split-horizon market with \textit{perfect foresight}, the final storage level on $\mathcal{T}^{\mathrm{d1}}$ is set optimally. That is, $E^{\mathrm{end}}$ is equal to the final optimal storage level $e_H^*$ when solving the full-horizon problem. 
For fair comparison, the storage level at the end of the second day is unconstrained in both the full-horizon and the split-horizon problem.
The two optimization problems for the split-horizon problem are independent, besides the fact that one must choose the parameters right, which means that $E^{\mathrm{end}}$ on the first day must equal the initial storage level $E^{\mathrm{init}}$ on the second day. As the optimization problems are independent, they can be solved in an equivalent combined optimization without changing their solution.
The term \textit{split-horizon problem} refers to this combined optimization.
The objective functions of the full-horizon and split-horizon problems are equal.

We assume that we can disregard what happens after time $t = T$, i.e. we assume that there is a finite horizon for which the market with non-merchant storage needs to be cleared. 
This simplification is justified by the need for a perfect baseline to which we can compare the split-horizon solution. 
Over a finite horizon, the results from Section \ref{sec:3} show that the market clearing with non-merchant storage achieves the optimal solution, and cost recovery is ensured for the storage. Therefore, we can consider this full-horizon market clearing as a perfect benchmark. 

Optimal values in the full- and split- horizon problem are denoted with $*$ and $'$, respectively.
Our proofs use the Karush-Kuhn-Tucker (KKT) conditions of the full- and split-horizon problems, in particular those corresponding to dual constraints from \eqref{prob:simplified_dispatch_dual}. For completeness, we provide the full sets of KKT conditions in Appendix \ref{ap:KKT}.

\subsection{Illustrative Example II: Dual Multiplicity}\label{sec:illex}
Suppose the market is cleared for two sequential hours. 
The market participants include a single load, two generators, and a non-merchant storage with capacity $\overline{E} = 2.5 \si{{\watt\hour}}$ and initial level $E^\text{init} = 0$. 
The time-dependent parameters, demand and production limits, utility, and cost, are summarized in Table \ref{tab:time-inputs}, and will also be visible in market-clearing diagrams.
The corresponding code is available online \cite{GitHub}. 

\begin{table}[ht]
    \caption{Time-dependent inputs for Illustrative Example II of Section \ref{sec:4}}
    \label{tab:time-inputs}
    \begin{tabular}{c|cccccc}
    $t$ & $\overline{D}_{1t} (\si{{\watt\hour}})$ & $\overline{P}_{1t} (\si{{\watt\hour}})$ & $\overline{P}_{2t} (\si{{\watt\hour}})$ & $U_{1t}(\si{\euro\per\watt\hour})$ & $C_{1t}(\si{\euro\per\watt\hour})$ & $C_{2t}(\si{\euro\per\watt\hour})$ \\
    \hline
         1 & 0 & 2 & 2 & 12 & 5 & 10 \\
         2 & 3 & 2 & 2 & 12 & 2 & 9
    \end{tabular}
\end{table}

In order to evaluate the effect of splitting the market clearing on primal and dual solutions, we solve both the split-horizon problem with perfect foresight and the full-horizon problem. We thus show that the lack of dispatch support is not due to imperfect information, but an inherent effect of the market splitting. The optimal values for relevant variables are summarized in Table \ref{tab:cs-results}. The primal variables of the full- and split-horizon problems have equal optimal values and therefore appear in the table only once. 
\begin{table}[b]
\caption{Selected optimal primal and dual values in Illustrative Example II }
\label{tab:cs-results}
    \begin{tabular}{c|cccccc}
    $t$ & $e_t (\si{{\watt\hour}})$ & $ d_{1t} (\si{{\watt\hour}})$ & $ p_{1t} (\si{{\watt\hour}})$ & $ p_{2t} (\si{{\watt\hour}})$ & $ \lambda_{t}^*(\si{\euro\per\watt\hour})$  & $\lambda_{t}'(\si{\euro\per\watt\hour})$ \\
    \hline
         1 & 1 & 0 & 1 & 0 & 5 & 5 \\
         2 & 0 & 3 & 2 & 0 & 5 & [2, 9]
    \end{tabular}
\end{table}

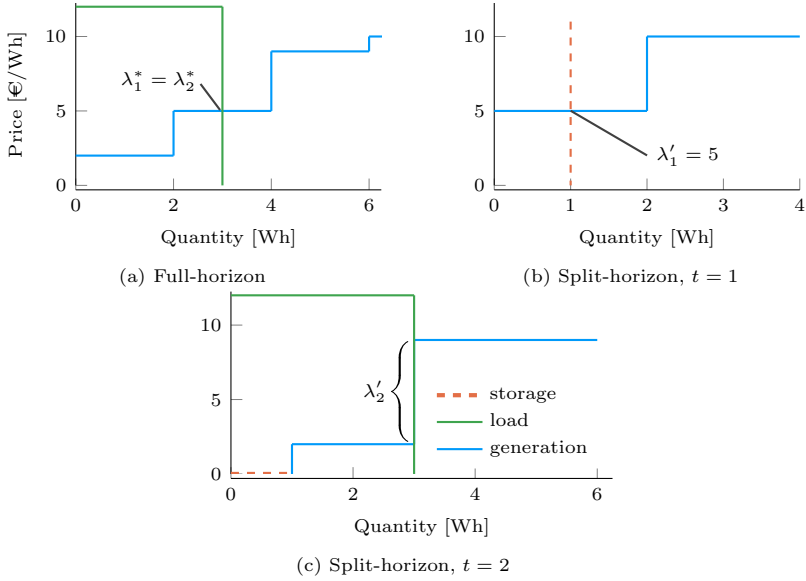
\begin{figure*}[t]
  \centering
    \subfloat[Full-horizon \label{fig:illex_full}]
    {\begin{tikzpicture}[thick,scale=1,font=\footnotesize]
    \pgfplotsset{ymin=0, ymax=12.25, xmin=0, xmax=6.25}
    \begin{axis}[
    xtick = {0,2,4,6,8},
    ytick = {0,5,10}, 
    ylabel near ticks, 
    ylabel={Price $[\euro / \si{{\watt\hour}}]$}, 
    xlabel={Quantity $[\si{{\watt\hour}}]$}, 
    axis y line*=left,
    axis x line shift=0.3 ,
    axis x line*=bottom,
    label style={font=\footnotesize},
    tick label style={font=\footnotesize}, 
    legend style={draw=white, name = ConvD_leg_pos, font=\footnotesize, 
        legend cell align={left},
        fill=white, legend pos = south east},
    width=5.6cm, height=4cm]
        \addplot[thick, samples=50, julia3] coordinates {(0,12)(3,12)};
        \addplot[thick, samples=50, julia1] coordinates {(0,2)(2,2)};
        \addplot[thick, samples=50, julia1] coordinates {(2,2)(2,5)};
        \addplot[thick, samples=50, julia1] coordinates {(2,5)(4,5)};
        \addplot[thick, samples=50, julia1] coordinates {(4,5)(4,9)};
        \addplot[thick, samples=50, julia1] coordinates {(4,9)(6,9)};
        \addplot[thick, samples=50, julia1] coordinates {(6,9)(6,10)};
        \addplot[thick, samples=50, julia1] coordinates {(6,10)(8,10)};
        \addplot[thick, samples=50, julia3] coordinates {(3,12)(3,0)};
        \path (axis cs:1.7, 7) coordinate (D1_1);
        \path (axis cs:2.55, 6.8) coordinate (D1_1b);
        \path (axis cs:2.95, 5.05) coordinate (D1_2);  
    \end{axis}
        \node at (D1_1) {$\lambda_1^* = \lambda_2^*$};
        \draw[color = black!75] (D1_1b) -- (D1_2);
    \end{tikzpicture}
    }
  \label{fig:illex2full}
  \qquad
  \subfloat[Split-horizon, $t=1$ \label{fig:illex_split_t1}]
    {\begin{tikzpicture}[thick,scale=1,font=\footnotesize]
    \pgfplotsset{ymin=0, ymax=12.25, xmin=0, xmax=4.0}
    \begin{axis}[
    xtick = {0,1,2,3,4,6},
    ytick = {0,5,10}, 
    xlabel={Quantity $[\si{{\watt\hour}}]$}, 
    axis y line*=left,
    axis x line shift=0.3,
    axis x line*=bottom,
    label style={font=\footnotesize},
    tick label style={font=\footnotesize}, 
    legend style={draw=white, name = ConvD_leg_pos, font=\footnotesize, 
        legend cell align={left}, anchor = south west,
        fill=white, at={(axis cs:4,2.5)}},
    width=5.6cm, height=4cm]
        \addplot[thick, samples=50, julia2, dashed] coordinates {(1,11)(1,0)};
        \addplot[thick, samples=50, julia1] coordinates {(0,5)(2,5)};
        \addplot[thick, samples=50, julia1] coordinates {(2,5)(2,10)};
        \addplot[thick, samples=50, julia1] coordinates {(2,10)(4,10)};
        \path (axis cs:1, 5) coordinate (D1_1);
        \path (axis cs:2, 2) coordinate (D1_2);    
    \end{axis}
    \begin{scope}[decoration={calligraphic brace, amplitude=6pt}]
        \node[anchor= west] at (D1_2) {$\lambda_1' = 5$};
        \draw[color = black!75] (D1_2) -- (D1_1);
    \end{scope}
    \end{tikzpicture}}
    \qquad
    \subfloat[Split-horizon, $t=2$ \label{fig:illex_split_t2}]
    {\begin{tikzpicture}[thick,scale=1,font=\footnotesize]
    \pgfplotsset{ymin=0, ymax=12.25, xmin=0, xmax=6.25}
    \begin{axis}[
    xtick = {0,2,4,6,8},
    ytick = {0,5,10}, 
    xlabel={Quantity $[\si{{\watt\hour}}]$}, 
    axis y line*=left,
    axis x line shift=0.3,
    axis x line*=bottom,
    label style={font=\footnotesize},
    tick label style={font=\footnotesize}, 
    legend pos = south west, 
    legend style={draw=white, name = ConvD_leg_pos, font=\footnotesize, 
        legend cell align={left},
        fill=white, legend pos = south east},
    width=6.6cm, height=4cm]
        \addplot[ultra thick, samples=50, julia2, dashed] coordinates {(0,0)(1,0)};
        \addlegendentry{storage}
        \addplot[thick, samples=50, julia3] coordinates {(0,12)(3,12)};
        \addlegendentry{load}
        \addplot[thick, samples=50, julia1] coordinates {(1,0)(1,2)};
        \addlegendentry{generation}
        \addplot[thick, samples=50, julia1] coordinates {(1,2)(3,2)};
        \addplot[thick, samples=50, julia1] coordinates {(3,2)(3,9)};
        \addplot[thick, samples=50, julia1] coordinates {(3,9)(6,9)};
        \addplot[thick, samples=50, julia3] coordinates {(3,12)(3,0)};
        \path (axis cs:2.9, 2.2) coordinate (D1_1);
        \path (axis cs:2.9, 8.9) coordinate (D1_2);    
    \end{axis}

    \begin{scope}[decoration={calligraphic brace, amplitude=6pt}]
        \draw[thick,decorate] (D1_1) -- (D1_2) node[midway,left=1ex]{$\lambda_2'$};
    \end{scope}
    \end{tikzpicture}
}
  \caption{Market-clearing diagram for full- and split-horizon problem in Illustrative Example II. The common legend is provided in Subfigure (c).}
\end{figure*}

In the full-horizon problem, the two hours are cleared together. As the storage is not at a bound after the first hour, the two hours comprise a single time zone, so that the market-clearing diagram for the two hours can be depicted as in Figure~\ref{fig:illex_full}. The unique optimal solution is to charge the storage to $e_1^* = 1\si{{\watt\hour}}$, and discharge it fully at $t=2$. 
As the storage is charging 1 unit and discharging 1 unit within the full horizon, its net power injection is zero, and therefore the storage is not depicted in Figure~\ref{fig:illex_full}.
The corresponding optimal values for the market price are $\boldsymbol{\lambda}^* = [5, 5] \si{\euro\per\watt\hour}$.
The storage improves the optimal value of the objective function by $1 \si{\watt\hour} \cdot (9 - 5) \si{\euro\per\watt\hour}= 4 \euro$ compared to the case without storage, as one unit of generation by the more expensive generator with cost of $9\si{\euro\per\watt\hour}$ at $t=2$ is replaced by the cheaper generator with cost of $5\si{\euro\per\watt\hour}$ at $t=1$.

The split-horizon problem with perfect foresight clears the two hours separately, while enforcing in the first market interval that $E^{\mathrm{end}} = e^*_1 = 1\si{{\watt\hour}}$. The primal solution is equal to that of the full-horizon market --- this result we formalize in Section~\ref{sec:primalequiv}. In the first hour, $\lambda_1' = 5\si{\euro\per\watt\hour}$, as illustrated in the market-clearing diagram in Figure~\ref{fig:illex_split_t1}. 
In the split-horizon market for the second hour, none of the loads and generators are marginal. This is depicted in the market-clearing diagram in Figure~\ref{fig:illex_split_t2}. Therefore, there is no longer a unique dual solution for the split-horizon problem, as all values $\lambda_2'$ in the range $[2, 9] \si{\euro\per\watt\hour}$ are optimal. This interval includes the full-horizon optimum for $\lambda_2$, so the full-horizon dual solution is still optimal for the split-horizon dual problem. This is a general result, as we will prove in Section~\ref{sec:primalequiv}. 

The dual multiplicity that can arise in the split-horizon problem deserves attention for several reasons. If $\lambda_2' \in [2, 5) \si{\euro\per\watt\hour}$ is selected, the market does not provide dispatch-following incentives to the storage, which can increase its profit by decreasing $e_1$. In this example, the storage even earns a negative profit, illustrating that cost recovery is not ensured. At the same time, this dispatch of the storage does achieve the optimal social welfare. Furthermore, as shown in this example, there exists a price that supports the dispatch. 

One could argue that just selecting the `correct', dispatch-supporting dual value for $\lambda_2' = 5\si{\euro\per\watt\hour}$ would solve this problem. However, in order to do so the market operator first of all needs to be {aware} of the possible existence of multiple dual solutions. Solvers usually return only one solution, which is not necessarily the property-preserving one. Second, if dual multiplicity occurs in the first market interval rather than the second, then the market operator cannot yet know which dual solution will preserve market properties.
Finally, selecting a dispatch-supporting dual solution becomes more complicated in an imperfect-foresight setting, where the chosen level $E^{\mathrm{end}}$ may be suboptimal in hindsight. 

\subsection{Shared Solutions of the Full- and Split-Horizon Problems}\label{sec:primalequiv}
In this section, we investigate whether the solutions of the full-horizon are also solutions to the split-horizon problem.  
Lemma \ref{lem:equivprimal} shows that primal solutions to the split-horizon problem with perfect foresight and those to the full-horizon problem are identical. 

\begin{lemma}\label{lem:equivprimal}
Let $e_H^*$ be part of an optimal solution $\mathbf{x}^*$ to the full-horizon problem. Then $\mathbf{x}^*$ is an optimal primal solution to the full-horizon problem if and only if $\mathbf{x}^*$ is an optimal primal solution to the split-horizon problem with $E^\mathrm{end} = e_H^*$.
\end{lemma}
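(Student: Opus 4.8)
The plan is to prove the result entirely at the level of feasible regions, since both problems share the same objective---the total social welfare over $\mathcal{T}$, as already noted in the excerpt---so no duality argument is needed. First I would make explicit that the split-horizon problem is nothing but the full-horizon problem $\text{\Na}(\mathcal{T})$ with the single extra restriction $e_H = e_H^*$. The only constraint that differs between the two formulations is the storage balance at the day boundary: the full-horizon problem links the two days through $e_{H+1} = e_H + b_{H+1}$, whereas the split-horizon problem replaces this by the end-of-day-one condition $e_H = E\up{end}$ coming from $\text{\LA}(\dayone)$, together with the day-two initial balance $e_{H+1} = E\up{end} + b_{H+1}$ (using the coupling that $E\up{init}$ on the second day equals $E\up{end}$ of the first). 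Substituting $E\up{end} = e_H^*$, I would check that on the slice $\{e_H = e_H^*\}$ the two descriptions coincide: $e_H = e_H^*$ together with $e_{H+1} = e_H^* + b_{H+1}$ is equivalent to $e_{H+1} = e_H + b_{H+1}$ together with $e_H = e_H^*$, and every other constraint (the energy balances, the generation and load bounds, the storage bounds, and the remaining storage balances) appears verbatim in both problems. This yields the feasible-set identity that the split-horizon feasible region equals the full-horizon feasible region intersected with $\{e_H = e_H^*\}$.

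The second step is to show that the two problems have the same optimal value. Because the split-horizon feasible set is contained in the full-horizon feasible set and the objectives coincide, the split-horizon optimum can be no larger than the full-horizon optimum. Conversely, the given solution $\mathbf{x}^*$ is optimal for the full-horizon problem and satisfies $e_H = e_H^*$ by the very definition of $e_H^*$; hence $\mathbf{x}^*$ belongs to the split-horizon feasible set and attains there the full-horizon optimal value. The two optimal values therefore agree.

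With the common optimal value in hand, the equivalence follows immediately in both directions. If $\mathbf{x}^*$ is optimal for the full-horizon problem, then, having $e_H = e_H^*$, it is split-horizon feasible and attains the shared optimal value, so it is optimal for the split-horizon problem. Conversely, any split-horizon optimizer is full-horizon feasible (by the feasible-set inclusion) and attains the shared optimal value, hence is full-horizon optimal; note that split-horizon feasibility already forces $e_H = e_H^*$, so no information is lost.

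The step I expect to require the most care is the first one: verifying that fixing $E\up{end} = e_H^*$ and coupling it to the second day's initial level reproduces exactly the full-horizon linking constraint, so that the split-horizon feasible region is precisely the face $\{e_H = e_H^*\}$ of the full-horizon region and nothing more. Once that identification is made cleanly, the remainder is the standard observation that restricting a maximization to a face that still contains a maximizer leaves the optimal value---and hence the optimal set intersected with that face---unchanged.
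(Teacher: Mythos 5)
Your proof is correct and follows essentially the same route as the paper's: both arguments rest on the observations that the two problems share the same objective, that the split-horizon feasible set is the full-horizon feasible set restricted to $\{e_H = e_H^*\}$, and that $\mathbf{x}^*$ lies in this restricted set, from which equality of optimal values and hence the two-way equivalence of optimizers follows. Your explicit constraint-by-constraint verification that fixing $E^\mathrm{end}=e_H^*$ at the day boundary reproduces exactly the full-horizon linking constraint is somewhat more careful than the paper, which simply asserts the feasible-set inclusion.
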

\begin{proof}
In Appendix \ref{ap:equivprimal}.
\end{proof}

Next, Lemma \ref{lem:admitdual} shows that any optimal primal and dual solution to the original problem can be converted to an equivalent solution for the split-horizon problem. 
This correspondence is sometimes overlooked in the literature. For example, \cite{Hua2019Pricing} discusses the existence of dual solutions to the split-horizon problem that are not dispatch supporting, but does not mention that the original dual solution remains a solution to the split-horizon problem too. 
\begin{lemma}\label{lem:admitdual}
Any optimal primal and dual solution pair $\{ \mathbf{x}^*, \boldsymbol{\zeta}^*\}$ to the full-horizon problem is also an optimal solution to the split-horizon problem, with additional split-horizon variable $\xi'$ taking the value $\xi' = \rho_{H+1}^*$.
\end{lemma}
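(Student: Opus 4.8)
The plan is to certify optimality via the KKT conditions, which for these linear programs reduce to primal feasibility, dual feasibility, and complementary slackness. I would propose as the split-horizon solution the unchanged primal vector $\mathbf{x}^*$ together with the full-horizon dual $\boldsymbol{\zeta}^*$ (partitioned in the obvious way between the day-one and day-two subproblems), augmented by the new day-one multiplier $\xi' = \rho_{H+1}^*$, and then verify each condition in turn.

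Primal feasibility comes essentially for free from Lemma \ref{lem:equivprimal}: the only constraint present in the split-horizon problem but absent from the full-horizon problem is the day-one end condition $e_H = E\up{end}$, and this holds because $E\up{end}$ was chosen equal to $e_H^*$.

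The core of the argument is dual feasibility, and this is also where I expect the only real subtlety. The dual constraints of the split-horizon problem agree time step by time step with those of the full-horizon problem everywhere except at the day boundary $t=H$. Constraints \eqref{eq:dual_gens}, \eqref{eq:dual_loads} and \eqref{eq:rholambda} are local in $t$ and are inherited unchanged; the time-linking constraints \eqref{eq:dual_timelink} for every $t \neq H$ coincide in both problems; and the day-two end constraint at $t=T$ matches the free-problem end constraint of the full horizon. The one genuine discrepancy is at $t=H$: the full-horizon problem imposes the time link \eqref{eq:dual_timelink}, i.e. $-\underline{\nu}_H + \overline{\nu}_H + \rho_H - \rho_{H+1} = 0$, whereas the split-horizon day-one problem imposes \eqref{eq:dual_timelink_H}, i.e. $-\underline{\nu}_H + \overline{\nu}_H + \rho_H - \xi = 0$. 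Setting $\xi' = \rho_{H+1}^*$ turns the second equation into the first, so $\boldsymbol{\zeta}^*$ satisfies it; the sign conditions on $\underline{\nu},\overline{\nu},\underline{\mu},\overline{\mu},\underline{\chi},\overline{\chi}$ are untouched.

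Finally I would confirm complementary slackness. For every constraint shared by the two problems this is inherited directly, since neither the primal nor the dual values change. The single new constraint $e_H = E\up{end}$ is an equality whose multiplier $\xi'$ is free, so it imposes no complementary-slackness requirement. With all three KKT conditions in place, $\{\mathbf{x}^*, (\boldsymbol{\zeta}^*, \xi')\}$ is optimal for the split-horizon problem. The main obstacle is thus conceptual rather than computational: one must track precisely how the single full-horizon coupling constraint at the day boundary is severed into a day-one end constraint and an unconstrained day-two initial condition, and recognize that the free variable $\xi'$ is exactly what takes over the role previously played by $\rho_{H+1}$.
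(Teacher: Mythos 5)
Your proposal is correct and follows essentially the same route as the paper's proof: both verify the KKT conditions of the split-horizon problem directly, observe that they differ from the full-horizon KKT system only in the replacement of $-\underline{\nu}_H + \overline{\nu}_H + \rho_H - \rho_{H+1} = 0$ by the pair $e_H = E^{\mathrm{end}}$ and $-\underline{\nu}_H + \overline{\nu}_H + \rho_H - \xi = 0$, and conclude by noting that $E^{\mathrm{end}} = e_H^*$ and $\xi' = \rho_{H+1}^*$ satisfy these, with sufficiency of the KKT conditions for convex problems closing the argument. Your explicit bookkeeping of complementary slackness and of the local versus time-linking constraints is slightly more detailed than the paper's, but the substance is identical.
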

\begin{proof}
In Appendix \ref{ap:admitdual}.
\end{proof}

\subsection{Weak Solutions to the Split-Horizon Problem}\label{sec:wrongduals}
While every optimal dual solution to the full-horizon problem is also an optimal dual solution to the split-horizon problem, the reverse generally does not hold. In this section, we discuss the existence of a weak optimal dual solution admitted by the split-horizon problem, but not by the full-horizon problem.
First, we define this type of dual solution, and show why it is a problem. 
Next, we provide sufficient conditions for the existence of weak dual solutions in the split-horizon problem in our main Theorem \ref{thrm:main}. 

\begin{definition}[Weak dual]\label{def:dual}
A dual solution to the split-horizon problem is \textbf{weak} if the resulting price is not dispatch supporting for the non-merchant storage. 
Such a weak dual exists if and only if one of the following situations occurs.
\begin{enumerate}
    \item $e_H \in (\underline{S}, \overline{E})$ and $\rho_H' \neq \rho_{H+1}'$
    \item $e_H = \underline{S}$ and $\rho_H' < \rho_{H+1}'$
    \item $e_H = \overline{E}$ and $\rho_H' > \rho_{H+1}'$ \,.
\end{enumerate} 
\end{definition}
For a weak dual solution, the storage operator has incentive to unilaterally deviate from the schedule determined in the market. This incentive exists if the solution to the storage's profit maximization problem \eqref{prob:prof} given market prices $\lambda_t$ is different from the schedule determined in the market. 
We distinguish two cases:
\begin{enumerate}
    \item If $\rho_H' = \xi' > \rho_{H+1}'$, the storage can improve its profit by decreasing $e_H'$. This is possible in case $e_H \neq \underline{S}$, i.e. both in situations 1) and 3) of Definition \ref{def:dual}.
    \item If $\rho_H' = \xi' < \rho_{H+1}'$, the storage can improve its profit by increasing $e_H'$. This is possible in case $e_H \neq \overline{E}$, i.e. both in situations 1) and 2) of Definition \ref{def:dual}.
\end{enumerate}
In extreme cases, a weak dual solution can cause the loss of cost recovery for the storage operator. For example, this happens if the storage bought all of the energy $e_H$ at the price $\rho_H'$ and is scheduled to sell all of it for a lower price $\rho_{H+1}'$.

\begin{theorem}\label{thrm:main}
Assume all cost and utility bids are unique \footnote{More specifically, utility bids are unique $U_{lt} = U_{l't'} \iff l = l' \wedge t = t'$, cost bids are unique and $C_{gt} = C_{g't'} \iff g = g' \wedge t = t'$, and $U_{lt} \neq C_{gt'} \, \forall t, t', l, g$. }.
If the optimal solution to the full-horizon problem is such that $H+1$ and $T$ are in different time zones, and
\begin{align}\label{condition_thrm}
    \rho_H^* = \rho_{H+1}^* \,,
\end{align}
then the split-horizon problem with perfect foresight admits a weak dual solution.
\end{theorem}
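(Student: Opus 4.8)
The plan is to exhibit an optimal dual solution of the split-horizon problem that realizes situation 1) of Definition \ref{def:dual}, namely a solution with $\rho_H' \neq \rho_{H+1}'$ while the storage is strictly between its bounds at $H$; such a dual is by definition \faulty{}. First I would record two consequences of the hypothesis $\rho_H^* = \rho_{H+1}^*$. By the full-horizon time-link condition \eqref{eq:dual_timelink} at $t = H$, the equality forces $\overline{\nu}_H^* = \underline{\nu}_H^* = 0$; generically this places $e_H^*$ strictly inside $(\underline{S}, \overline{S})$, so that \emph{any} split-horizon dual with $\rho_H' \neq \rho_{H+1}'$ is automatically \faulty{} (should $e_H^*$ instead sit at a bound with zero multiplier, the perturbation direction chosen below can be matched to situation 2) or 3)). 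The equality also means $H$ and $H+1$ lie in one full-horizon time zone $\mathcal{Z}$ with common price $c$; since $H+1$ and $T$ are in different zones, $\mathcal{Z}$ terminates at some $Z < T$ where the storage is at a bound. I would then split $\mathcal{Z}$ at the day boundary into its day-one part $A = \mathcal{Z} \cap \dayone$ (ending at $H$) and its day-two part $B = \mathcal{Z} \cap \daytwo = \{H+1, \dots, Z\}$.

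Next I would take the non-\faulty{} baseline supplied by Lemma \ref{lem:admitdual}: the full-horizon optimal dual is an optimal split-horizon dual with $\rho_H' = \rho_{H+1}' = c$. The aim is to perturb this baseline so the price on exactly one side of the boundary moves off $c$. The decisive observation is that, because all bids are unique, at most one participant in the whole horizon bids the value $c$; hence at most one of $A$ and $B$ can contain a marginal participant, as any marginal participant in $A \cup B$ must bid the zone price $c$. Therefore at least one of $A$, $B$ carries no marginal participant, and by complementary slackness its constant zone price is not pinned.

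I would then exploit the free side. If $B$ is free, I move the common day-two price $\rho_{H+1}' = \dots = \rho_Z'$ away from $c$; the condition $Z < T$ is exactly what guarantees that $B$ is a \emph{non-terminal} day-two zone, bounded on the right by a genuine storage-bound congestion at $Z$ rather than by the free-end condition \eqref{eq:dual_timelink_H} at $T$, so there is directional room to shift the price while keeping the sign of the associated $\overline{\nu}_Z'$ or $\underline{\nu}_Z'$ correct. If instead $A$ is free, I perturb $\rho_H' = \xi'$ on the day-one side, using that in the split-horizon problem $\rho_H'$ and $\rho_{H+1}'$ are no longer tied by any time-link constraint, since day one terminates with the free multiplier $\xi'$ and day two restarts from $E\up{init} = e_H^*$. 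Either way the move produces $\rho_H' \neq \rho_{H+1}'$ while preserving primal optimality (Lemma \ref{lem:equivprimal}) and dual feasibility, which is the desired \faulty{} dual.

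The main obstacle I expect is the feasibility bookkeeping for the perturbed dual. I must check that shifting one zone's price keeps every generator and load on the correct side of its bid -- guaranteed for a small enough shift by the strict bid separation that the uniqueness assumption provides -- and, more delicately, that the storage-bound relations at the outer ends of the perturbed zone remain satisfiable with nonnegative $\underline{\nu}, \overline{\nu}$. This is precisely where the ``different time zones'' hypothesis does its work, since it certifies the congestion at $Z$ that supplies the required slack; the residual care is the degenerate configuration in which $e_H^*$ lies on a bound with vanishing multiplier, which the uniqueness of bids is used to control.
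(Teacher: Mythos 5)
Your proposal is correct and follows essentially the same route as the paper's proof: both exploit the fact that uniqueness of bids leaves at most one marginal participant in the time zone straddling the day boundary, so at least one of its day-one/day-two parts has an unpinned price, which can then be perturbed within the slack of the dual feasibility constraints and the congestion multiplier at the zone's right end $Z<T$. The only substantive difference is that you explicitly flag the degenerate case where $e_H^*$ sits at a bound with vanishing multiplier and match the perturbation sign to situations 2) or 3) of Definition \ref{def:dual} --- a point the paper's construction leaves implicit.
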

\begin{proof}
We construct a weak dual solution to the split-horizon problem, based on the given dual solution to the full-horizon problem.
More precisely, we derive values of $\delta>0$ for which 
\begin{align}
    \rho_{H+1}' = \rho_{H+1}^* + \delta 
\end{align}
is part of an optimal dual solution $\boldsymbol{\zeta}'$ that satisfies the KKT conditions of the non-myopic split-horizon problem with perfect foresight. We construct $\delta$ in such a way that $\rho^*_H$ can remain unchanged, i.e. $\rho_H' = \rho^*_H$.
In this proof, we refer to KKT conditions coming from the dual and primal problems, which have been presented in previous sections. 

By \eqref{condition_thrm}, $H$ and $H+1$ belong to the same time zone $\mathcal{X}$. 
Denote the time periods in $\mathcal{X}$ that are part of day 1 by the set $\mathcal{X}^{\mathrm{d1}}$, and those that are part of day 2 by $\mathcal{X}^{\mathrm{d2}}$.
Note that $\mathcal{X}^{\mathrm{d1}} \subseteq \mathcal{T}^{\mathrm{d1}}$ and $\mathcal{X}^{\mathrm{d2}} \subseteq \mathcal{T}^{\mathrm{d2}}$.
These sets are illustrated in Figure \ref{fig:timezone}.

Due to uniqueness of cost and utility bids, there can at most be one marginal load or generator in $\mathcal{X}$ for a single time period $t\in\mathcal{X}$, which is part of either $\mathcal{X}^{\mathrm{d1}}$ or $\mathcal{X}^{\mathrm{d2}}$. Therefore, at least one of $\mathcal{X}^{\mathrm{d1}}$ and $\mathcal{X}^{\mathrm{d2}}$ does not have a marginal load or generator. We assume that this is the case for $\mathcal{X}^{\mathrm{d2}}$. The proof is similar for the other case.

By uniqueness of cost and utility bids, $\rho_H^*$ can at most be equal to a single cost or utility bid within $\mathcal{X}$.
If there is no marginal load or generator in $\mathcal{X}$, it can happen that $\rho_H^*$ is not equal to any cost or utility bid.

From the KKT conditions of the full-horizon problem corresponding to dual constraints \eqref{eq:dual_gens} and \eqref{eq:dual_loads}, for scheduled loads and generators in $\mathcal{X}^{\mathrm{d2}}$ it holds that
\begin{align}\label{eq:utilcon_new}
    \overline{\chi}^*_{lt} &=  U_{lt} - \rho_{t}^* \geq 0 &&\text{for } \forall t \in \mathcal{X}^{\mathrm{d2}},\, l \in \mathcal{L}_t^{\mathrm{+}} \\
    \label{eq:costcon_new}
    \overline{\mu}^*_{gt} &= \rho_{t}^* -  C_{gt} \geq 0 &&\text{for } \forall t \in \mathcal{X}^{\mathrm{d2}},\, g \in \mathcal{G}_t^{\mathrm{+}} \,,
\end{align}
where $\mathcal{L}_t^{\mathrm{+}} = \{ l \in \mathcal{L}|\, d^*_{lt} > 0\} $ and $\mathcal{G}_t^{\mathrm{+}} = \{ g \in \mathcal{G}|\, p^*_{gt} > 0\} $.
The inequalities are actually strict for all but a single scheduled load or generator, except possibly the load or generator that has a cost bid equal to $\rho_H^*$. Similarly, for unscheduled loads and generators in $\mathcal{X}^{\mathrm{d2}}$, 
\begin{align}\label{eq:utilcon_new_2}
    \underline{\chi}^*_{lt} &=   \rho_{t}^* - U_{lt} > 0 &&\text{for } \forall t \in \mathcal{X}^{\mathrm{d2}},\, l \in \mathcal{L}_t^{\mathrm{0}} \\
    \label{eq:costcon_new_2}
    \underline{\mu}^*_{gt} &=  C_{gt} - \rho_{t}^* > 0 &&\text{for } \forall t \in \mathcal{X}^{\mathrm{d2}},\,  g \in \mathcal{G}_t^{\mathrm{0}} \,,
\end{align}
where $\mathcal{L}_t^{\mathrm{0}} = \{ l \in \mathcal{L}|\, d^*_{lt} = 0\} $ and $\mathcal{G}_t^{\mathrm{0}} = \{ g \in \mathcal{G}|\, p^*_{gt} = 0\} $.

We define the maximum positive change in $\rho_t$ for $t\in\mathcal{X}^{\mathrm{d2}}$ as
\begin{align}
    \overline{\Delta}^+ = \min_{t\in\mathcal{X}^{\mathrm{d2}}} \big \{\overline{\chi}^*_{lt} \; | \, l \in \mathcal{L}_t^{\mathrm{+}} \big \} \cup  \big \{\underline{\mu}^*_{gt}  \; | \,g \in \mathcal{G}_t^{\mathrm{0}} \big \}  \,.
\end{align} 
This is defined so that an increase of $\rho_{H+1}$ by $\overline{\Delta}^+$ can be counterbalanced by changing $\overline{\mu}$, $\underline{\mu}$, $\overline{\chi}$, $\underline{\chi}$, so that KKT conditions corresponding to \eqref{eq:dual_gens} and \eqref{eq:dual_loads} still hold. Similarly, the maximum negative change in $\rho_t$ for $t\in\mathcal{X}^{\mathrm{d2}}$ is 
\begin{align}
    \overline{\Delta}^- = \min_{t\in\mathcal{X}^{\mathrm{d2}}} \big \{\underline{\chi}^*_{lt} \; | \, l \in \mathcal{L}_t^{\mathrm{0}} \big \} \cup  \big \{\overline{\mu}^*_{gt}  \; | \, g \in \mathcal{G}_t^{\mathrm{+}} \big \} \,.
\end{align}
Non-negativity of $\overline{\Delta}^+$ and $\overline{\Delta}^-$ follows from \eqref{eq:utilcon_new}-\eqref{eq:costcon_new_2} and \eqref{eq:costcon_new}-\eqref{eq:utilcon_new_2}, respectively.
In fact, at least one of $\overline{\Delta}^+$ and $\overline{\Delta}^-$ is strictly positive
\begin{align}\label{eq:deltaplus}
    \overline{\Delta}^+ > 0 \, \vee \, \overline{\Delta}^- > 0 \,,
\end{align}
since the inequalities in \eqref{eq:utilcon_new} and \eqref{eq:costcon_new} are strict for all but a single load or generator. 

Other than KKT conditions corresponding to \eqref{eq:dual_gens} and \eqref{eq:dual_loads}, the dual variable $\rho_t$ for $t\in\mathcal{X}^{\mathrm{d2}}$ is bound by an end of time-zone relation due to the KKT condition corresponding to dual constraint~\eqref{eq:dual_timelink}:
\begin{align}\label{eq:endofTZcondition}
    \rho_X^* = \rho_{X+1}^* + \underline{\nu}_X^* - \overline{\nu}_X^* \,,
\end{align}
where $X$ denotes the final time period in $\mathcal{X}$. 
As $X < T$, it holds that $\underline{\nu}_X^* > 0 \vee \overline{\nu}_X^* >0$ by the definition of a time zone. From \eqref{eq:endofTZcondition} we can derive the following additional constraint on $\delta$:
\begin{align}\label{eq:deltaend1}
    \delta &\in [-\infty, \overline{\nu}_X^*] && \text{if } \overline{\nu}_X^* > 0 \\
    \label{eq:deltaend2}
    \delta &\in [-\underline{\nu}_X^*, \infty] && \text{if } \underline{\nu}_X^* > 0 \,.
\end{align}
Combining \eqref{eq:deltaplus}, \eqref{eq:deltaend1}, and \eqref{eq:deltaend2},
\begin{align}\label{eq:deltatot}
    \delta \in &\left[-\overline{\Delta}^-, \min \big\{\overline{\nu}_X^*,\,  \overline{\Delta}^+\big\}\right]  && \text{if } \overline{\nu}_X^* > 0 \\
    \delta \in &\left[-\min\big\{\underline{\nu}_X^*, \, \overline{\Delta}^- \big\}, \overline{\Delta}^+\right]  && \text{if } \underline{\nu}_X^* > 0 \,.
\end{align}
Condition \eqref{eq:deltaplus} ensures that either the upper or lower bound on $\delta$ is nonzero, in both these cases. 

Finally, we can construct the weak dual solution accordingly:
\begin{align}
    &\rho_{t}' =  \rho_t^* + \delta & \forall t \in \mathcal{X}^{\mathrm{d2}} \\
    &\overline{\chi}_{lt}' = \overline{\chi}_{lt}^* - \delta & \forall t \in \mathcal{X}^{\mathrm{d2}}, l \in \mathcal{L}_t^{\mathrm{+}} \\
    &\underline{\chi}_{lt}' = \underline{\chi}_{lt}^* + \delta & \forall t \in \mathcal{X}^{\mathrm{d2}}, l \in \mathcal{L}_t^{\mathrm{0}} \\
    &\overline{\mu}_{gt}' = \overline{\mu}_{gt}^* + \delta & \forall t \in \mathcal{X}^{\mathrm{d2}}, g \in \mathcal{G}_t^{\mathrm{+}} \\
    &\underline{\mu}_{gt}' = \underline{\mu}_{gt}^* - \delta & \forall t \in \mathcal{X}^{\mathrm{d2}}, g \in \mathcal{G}_t^{\mathrm{0}} \\
    & \underline{\nu}_X' - \overline{\nu}_X'  = \underline{\nu}_X^* - \overline{\nu}_X^* + \delta \,.
\end{align}
\end{proof}
Intuitively, the conditions in Theorem \ref{thrm:main} describe the setting in which the full-horizon optimal solution has a time zone which extends over the two different days, but does not cover the entire second day. When this time zone is split over two market-clearing intervals, at least one of the parts will lack a marginal load or generator, and the optimal market-clearing price for the affected time periods can take multiple values. 
These conditions are \textit{sufficient} but not necessary, i.e. there are other cases in which a weak dual solution to the split-horizon problem exists.

Regarding the assumptions in Theorem \ref{thrm:main}, the uniqueness of cost and utility bids excludes certain miscellaneous exceptional cases, such as the case that all market participants are either on an upper or lower bound, but there is still no freedom in the related dual variables.
We further assume that the time zone $\mathcal{X}$ that spans the two days does not include $t=T$, the final time period of day two. 
This assumption is used to exclude the following very specific and unlikely case.
It could happen that $T\in\mathcal{X}$, while at the same time $e_T^* \in (\underline{S}, \overline{E})$. By KKT condition given by dual constraint \eqref{eq:dual_timelink_H}, this implies that $\rho_t^* = 0$ for all $t \in \mathcal{X}$. If there is a marginal generator bidding $0$ in the part of $\mathcal{X}$ that is on day one, this fixes $\rho'_H = 0$ for the split-horizon problem too.
Furthermore, due to dual constraint \eqref{eq:dual_timelink_H}, it must hold that $\rho_{H+1}'=0$.

\section{Restoring Market Efficiency} \label{sec:5}

In this section we propose a method to ensure equivalence between dual solutions to the split-horizon problem and the full-horizon problem. 
We explain how our method solves the problem in case of perfect foresight, and discuss its use in an imperfect foresight setting.

\subsection{Proposed Method} 
In the perfect foresight setting, the optimal dual variables to the full-horizon problem, including $\rho^*_{H+1}$, are assumed known or predicted perfectly when clearing the first-day market. 
We denote solutions to the restored split-horizon problem using $''$.
When clearing the market for the second day, $\rho_H''$, $\underline{\nu}_H''$, and $\overline{\nu}_H''$ are known. 

Our method aims to modify the split-horizon problem in order to restore the dual constraint that is missing in the split-horizon problem:
\begin{align}\label{eq:missing_dualrel}
    \rho_{H+1} = \rho_H - \underline{\nu}_H + \overline{\nu}_H \,,
\end{align}
while it is present in the full-horizon problem.

First, we modify problem $\textbf{C}(\mathcal{T}^{\mathrm{d1}})$. Instead of fixing the final storage level $e_H$ using a constraint, we use the value $\rho_{H+1}^*$ to steer the optimal dispatch of the storage. 
This is done by adding $e_H \, \rho_{H+1}^*$ to the objective, where $\rho_{H+1}^*$ is the perfect prediction of this dual variable. 
As a result, the first-day dual problem includes the dual constraint
\begin{align}\label{eq:restored_rel_d1}
        & - \underline{\nu}_H + \overline{\nu}_H + \rho_H - \rho_{H+1}^* = 0 
\end{align}
instead of \eqref{eq:dual_timelink_H}, 
where it should be noted that $\rho_{H+1}^*$ is a parameter in the day 1 problem.

Next, we modify problem $\textbf{F}(\mathcal{T}^{\mathrm{d2}})$, using $\rho_{H}''$, $\underline{\nu}_{H}''$, and $\overline{\nu}_{H}''$ determined by the modified day 1 problem as parameters.
The initial storage level $e_{H}$ is now included in the second day problem as a {variable} instead of a parameter. We add $-e_{H} \, (\rho_{H}'' - \underline{\nu}_{H}'' + \overline{\nu}_{H}'')$ to the objective function.
The dual problem for day 2 will then include the following constraint related to $e_{H}$:
\begin{align}\label{eq:restored_rel_d2}
    - \rho_{H+1} + \rho_{H}'' - \underline{\nu}_{H}'' + \overline{\nu}_{H}'' = 0    \,,
\end{align}
which equals the missing dual constraint, with dual variables related to day~1 as parameters. 

However, these modifications introduce the possibility to have multiple solutions in the primal problem. We can overcome this by reintroducing the constraint to fix the final storage level in the first day problem. In that case, the price for the last hour of the first day is the sum $\lambda_H + \xi$.
The dual variable $\xi$ needs to be included to the objective function for the second day too, such that the added term is $-e_{H} \, (\rho_{H}'' + \xi'' - \underline{\nu}_{H}'' + \overline{\nu}_{H}'')$.
We also include a constraint to fix the variable $e_{H}$ to be equal to $e_{H}^*$ in the problem of the second day, with an associated dual variable $\zeta$. In that case, the price for the first hour of the second day is the sum $\lambda_{H+1} + \zeta$. Note that a similar approach can be followed to restore missing dual constraints when splitting a problem with any type of time-linking constraint. In the context of market clearing, this could be, for example, ramp rates of generators.

The added term $e_H \, \rho_{H+1}^*$ in the objective of the first-day market resembles the `future value of stored water' commonly applied in optimal scheduling problems for hydropower. In that application, $\rho_{H+1}^*$ would represent the opportunity cost of each unit left in the storage at the end of the considered period. The future-value-of-storage function $f(e_H)=e_H \, \rho_{H+1}^*$ in our proposed solution only represents that value correctly at the optimal value of $e_H^*$. In an imperfect foresight setting, previous works estimated this function for all feasible values of $e_H$, for instance using dynamic programming and Benders decomposition \cite{pereira1991multi}. 
On the other hand, our proposed added term to the objective function of the second-day market is not seen in hydropower scheduling applications. 
This term represents the value of stored energy from past market-clearing intervals, which is essential for ensuring that the prices support the dispatch.

\subsection{Illustrative Example}
We apply the proposed method to {Illustrative Example II} from Section \ref{sec:illex}. In the perfect foresight case, we assume that the optimal value $\rho_2^* = 5 \si{\euro\per\watt\hour}$ is known at the time of market clearing for day 1. 
Using this value, we obtain $\rho''_H +\xi'' = 5 \si{\euro\per\watt\hour}$. 
The second day problem no longer admits multiple dual solutions, because the new dual constraint \eqref{eq:restored_rel_d2} enforces that $\rho_{H+1}'' +\zeta'' = \rho''_H +\xi'' = 5 \si{\euro\per\watt\hour}$. In summary, prices for both $H$ and $H+1$ are equal to their original values in the full-horizon problem. As a result, market efficiency (including dispatch-following incentives) and cost recovery for the storage are again ensured.

\subsection{Imperfect Foresight}
In an imperfect foresight setting, the market operator would make an error in estimating $\rho_{H+1}$ and $e_H$. The resulting market prices might be misleading. As we force the use of the storage to reach the specified storage end level, some loads could be asked to pay a price that is higher than their utility and some generators could get paid with a price that is lower than their costs. Market efficiency is thus not ensured, as well as cost recovery for loads and generators.


\section{Discussion \& Conclusion} \label{sec:conclusion}

The inclusion of non-merchant storage in energy market clearing has received attention, among others for its potential to increase social welfare. 
In this work, we have argued against several simplifying assumptions that are commonly made in the literature regarding the final state of energy of the non-merchant storage, in particular, to set the final state of energy equal to the initial state of energy, or to disregard the final level altogether. We have shown that under these assumptions, market efficiency is not ensured, as the market may fail to provide dispatch-following incentives for the storage system. In addition, the storage can only perform time arbitrage within a single market interval, but not between market horizons, resulting in a loss of overall social welfare. 
However, allowing the final state of energy to take any value can also have negative consequences on the market properties if not handled carefully. 
First, one must determine the value of the final state of energy that is optimal for the storage system.
Second, the market prices may not reflect the relation between different market horizons.

Regarding the latter, we have shown that the split-horizon market may fail to provide dispatch-following incentives for the storage, even when the final state of energy is set perfectly. 
However, we have shown that any solution to the full-horizon problem is also a solution to the split-horizon problem, in the perfect-foresight setting. This changes the nature of the problem, compared to what is discussed in the literature. It shows that there always \textit{exists} a property-preserving dual solution to the split-horizon problem, namely, the dual solution to the full-horizon problem. However, there may in addition exist optimal dual solutions to the split-horizon problem, which are infeasible to the full-horizon dual problem and lead to market inefficiencies. Therefore, the challenge \textit{in the perfect foresight setting} becomes that of selecting the correct dual solution. We have provided sufficient conditions for the existence of weak dual solutions. 
Finally, we have proposed a method that restores market properties in the split-horizon problem, in the perfect-foresight case.

However, it becomes more complicated in the imperfect-foresight case, where the value of $E^{\mathrm{end}}$ may turn out to be suboptimal in hindsight. The proposed solution cannot be applied in case $e_H$ is suboptimal in hindsight. Future work should develop solutions for the imperfect-foresight case. Here, it should be considered that the suboptimal final state of energy leads to a social welfare loss compared to the perfect-foresight case. It is a nontrivial question how this loss should be fairly divided among the market participants, especially since the storage may improve social welfare in expectation. 

Furthermore, we note that illustrative examples and proofs in this work are valid for a market clearing with linear cost and utility functions. We have restricted ourselves to this setting for simplicity, and because this type of market clearing is common in practice.
However, the market efficiency problem can also arise in the nonlinear convex case, under similar conditions. Future works should analyze this case in further detail. 

The negative effects of simplifying assumptions on the final level of the storage are most critical for very large storage systems, as the seasonal arbitrage that these assets could provide would not be harnessed. Future work should assess for which storage characteristics these simplifying assumptions have the greatest negative impact.
Finally, we have neglected the problem of determining the value for the final state of energy. Future work should focus on how to determine this value, both under perfect and imperfect information.


\begin{appendices}
\section{Nomenclature and Notations} \label{ap:nomencl}
\printnomenclature

\nomenclature[C]{$e_t$}{State of energy at the end of $t$ (\si{{\watt\hour}})}
\nomenclature[C]{$b_t$}{Energy charged or discharged at $t$ (\si{{\watt\hour}})}
\nomenclature[C]{$\mathbf{x}$}{Vector grouping all the primal variables}
\nomenclature[C]{$p_{gt}$}{Production of $g$ at $t$ (\si{{\watt\hour}})}
\nomenclature[C]{$d_{lt}$}{Demand of $l$ at $t$ (\si{{\watt\hour}})}

\nomenclature[A]{$g \in \mathcal{G}$}{Generators}
\nomenclature[A]{$l \in \mathcal{L}$}{Loads}
\nomenclature[A]{$t \in \mathcal{T}$}{Time periods}

\nomenclature[B]{$\overline{E}$}{Storage capacity (\si{{\watt\hour}})}
\nomenclature[B]{$E^{\mathrm{init}}$}{Initial state of energy (\si{{\watt\hour}})}
\nomenclature[B]{$E^{\mathrm{end}}$}{Final state of energy (\si{{\watt\hour}})}
\nomenclature[B]{$U_{lt}$}{Utility of $l$ at $t$ (\si{\euro\per\watt\hour})}
\nomenclature[B]{$C_{gt}$}{Cost of $g$ at $t$ (\si{\euro\per\watt\hour})}
\nomenclature[B]{$\overline{P}_{gt}$}{Maximum output of $g$ at $t$ (\si{{\watt\hour}})}
\nomenclature[B]{$\overline{D}_{lt}$}{Maximum consumption of $l$ at $t$ (\si{{\watt\hour}})}

\nomenclature[D]{$\lambda_t$}{Dual variable associated with the energy balance at time $t$, representing the market price in that time period (\si{\euro\per\watt\hour})}
\nomenclature[D]{$\boldsymbol{\zeta}$}{Vector grouping all the dual variables}
\subsection*{Notation Rules}

Sets are represented with calligraphic capital letters, e.g., $\mathcal{T}$. Primal variables are represented with lower-case Roman letters ($p_{gt}$ for instance), while dual variables are represented with Greek letters (for example, $\xi$). Characters in bold font indicate vectors grouping all the variables. Parameters are given with capital letters (e.g., $C_{gt}$). The overline is used for maximum and the underline for minimum, $\overline{P}$ and $\underline{P}$, respectively.

Indices are given as subscript, as for the example of $._t$ for time indices. Superscripts are used to further describe the variables or parameters (e.g., $E^{\mathrm{end}}$), while the superscript $*$ is used to describe an optimal value of primal or dual variables. The superscript $'$ is used to describe an optimal value of primal or dual variables of the split-horizon problem (in Section \ref{sec:4}, e.g., $\lambda_t'$). Finally, the superscript $''$ is used to describe an optimal value of primal or dual variables for the restored split-horizon problem (in Section \ref{sec:5}, e.g., $\rho_H''$).

\section{KKT Conditions}\label{ap:KKT}
\subsection{KKT Conditions Full-Horizon Problem}\label{ap:fullKKT}
The KKT conditions of $\text{\textbf{N}}(\mathcal{T})$ are
\begin{subequations}\label{prob:simplified_dispatch_KKTs}
\allowdisplaybreaks
\begin{align}
    \label{eq:KKT_1a}
    & C_{gt} - \underline{\mu}_{gt} + \overline{\mu}_{gt} - \lambda_t = 0, & \forall g \in \mathcal{G}, t \in \mathcal{T}  \\
    \label{eq:KKT_1b}
    & -U_{lt} - \underline{\chi}_{lt} + \overline{\chi}_{lt} + \lambda_t = 0, & \forall l \in \mathcal{L}, t \in \mathcal{T}  \\
    \label{eq:KKT_2}
    & - \rho_t + \lambda_t = 0, & \forall t \in \mathcal{T} \\
    \label{eq:KKT_3a}
    &-\underline{\nu}_t + \overline{\nu}_t +\rho_t - \rho_{t+1} = 0, & \forall t \in\mathcal{T}\setminus \{T\}\\
    \label{eq:KKT_3b}
    &-\underline{\nu}_T + \overline{\nu}_T +\rho_T = 0 \\
    \label{eq:KKT_4} 
    & 0  \leq p_{gt} \perp \underline{\mu}_{gt} \geq 0 , & \forall g \in \mathcal{G}, t \in \mathcal{T}\\
    \label{eq:KKT_5} 
    &0 \leq \overline{P}_{gt} -p_{gt} \perp \overline{\mu}_{gt} \geq 0 , & \forall g \in \mathcal{G}, t \in \mathcal{T}\\
    \label{eq:KKT_6} 
    & 0 \leq d_{lt} \perp \underline{\chi}_{lt} \geq 0 , & \forall l \in \mathcal{L}, t \in \mathcal{T} \\
    \label{eq:KKT_7} 
    & 0 \leq  \overline{D}_{lt}-d_{lt} \perp \overline{\chi}_{lt} \geq 0 , & \forall l \in \mathcal{L}, t \in \mathcal{T}\\
    \label{eq:KKT_8} 
    & 0 \leq e_t  \perp \underline{\nu}_{t} \geq 0 , & \forall t \in \mathcal{T} \\
    \label{eq:KKT_9} 
    & 0 \leq  \overline{E}-e_t \perp \overline{\nu}_{t} \geq 0 , & \forall t \in \mathcal{T} \\
    \label{eq:KKT_10} 
    & \eqref{eq:energy_bal} \;, \eqref{eq:stor_bal_t} \;, \eqref{eq:stor_bal_1} .
\end{align}
\end{subequations}

\subsection{KKT Conditions Split-Horizon Problem}\label{ap:splitKKT}
The KKT conditions of $\text{\textbf{L}}(\mathcal{T}^{\mathrm{d1}})$ are
\begin{subequations}\label{prob:simplified_dispatch_KKTs_D1}
\allowdisplaybreaks
\begin{align}
    \label{eq:KKT_1a_d1}
    & C_{gt} - \underline{\mu}_{gt} + \overline{\mu}_{gt} - \lambda_t = 0, & \forall g \in \mathcal{G}, t \in \mathcal{T}^{\mathrm{d1}}  \\
    \label{eq:KKT_1b_d1}
    & -U_{lt} - \underline{\chi}_{lt} + \overline{\chi}_{lt} + \lambda_t = 0, & \forall l \in \mathcal{L}, t \in \mathcal{T}^{\mathrm{d1}} \\
    \label{eq:KKT_2_d1}
    & - \rho_t + \lambda_t = 0, & \forall t \in \mathcal{T}^{\mathrm{d1}} \\
    \label{eq:KKT_3a_d1}
    &-\underline{\nu}_t + \overline{\nu}_t +\rho_t - \rho_{t+1} = 0, & \forall t \in\mathcal{T}^{\mathrm{d1}}\setminus \{H\}\\
    \label{eq:KKT_3b_d1}
    &-\underline{\nu}_H + \overline{\nu}_H +\rho_H - \xi = 0\\
    \label{eq:KKT_4_d1} 
    & 0  \leq p_{gt} \perp \underline{\mu}_{gt} \geq 0 , & \forall g \in \mathcal{G}, t \in \mathcal{T}^{\mathrm{d1}}\\
    \label{eq:KKT_5_d1} 
    &0 \leq \overline{P}_{gt} -p_{gt} \perp \overline{\mu}_{gt} \geq 0 , & \forall g \in \mathcal{G}, t \in \mathcal{T}^{\mathrm{d1}}\\
    \label{eq:KKT_6_d1} 
    & 0 \leq d_{lt} \perp \underline{\chi}_{lt} \geq 0 , & \forall l \in \mathcal{L}, t \in \mathcal{T}^{\mathrm{d1}} \\
    \label{eq:KKT_7_d1} 
    & 0 \leq  \overline{D}_{lt}-d_{lt} \perp \overline{\chi}_{lt} \geq 0 , & \forall l \in \mathcal{L}, t \in \mathcal{T}^{\mathrm{d1}}\\
    \label{eq:KKT_8_d1} 
    & 0 \leq e_t  \perp \underline{\nu}_{t} \geq 0 , & \forall t \in \mathcal{T}^{\mathrm{d1}} \\
    \label{eq:KKT_9_d1} 
    & 0 \leq  \overline{E}-e_t \perp \overline{\nu}_{t} \geq 0 , & \forall t \in \mathcal{T}^{\mathrm{d1}} \\
    \label{eq:KKT_9b_d1} 
    &  e_H = {E}^{\mathrm{end}} \\ 
    & e_1 = E^{\mathrm{init}} + b_{1} \\
    & e_t = e_{t-1} + b_{t}, & \forall t \in\mathcal{T}^{\mathrm{d1}}\setminus\{1\} \\
    & \sum_{l\in\mathcal{L}}d_{lt} + b_{t} -\sum_{g\in\mathcal{G}}p_{gt}= 0 , & \forall t \in \mathcal{T}^{\mathrm{d1}} .
\end{align}
\end{subequations}

The KKT conditions of $\text{\textbf{N}}(\mathcal{T}^{\mathrm{d2}})$ are
\begin{subequations}\label{prob:simplified_dispatch_KKTs_D2}
\allowdisplaybreaks
\begin{align}
    \label{eq:KKT_1a_d2}
    & C_{gt} - \underline{\mu}_{gt} + \overline{\mu}_{gt} - \lambda_t = 0, & \forall g \in \mathcal{G}, t \in \mathcal{T}^{\mathrm{d2}}  \\
    \label{eq:KKT_1b_d2}
    & -U_{lt} - \underline{\chi}_{lt} + \overline{\chi}_{lt} + \lambda_t = 0, & \forall l \in \mathcal{L}, t \in \mathcal{T}^{\mathrm{d2}}  \\
    \label{eq:KKT_2_d2}
    & - \rho_t + \lambda_t = 0, & \forall t \in \mathcal{T}^{\mathrm{d2}} & \\
    \label{eq:KKT_3a_d2}
    &-\underline{\nu}_t + \overline{\nu}_t +\rho_t - \rho_{t+1} = 0, & \forall t \in\mathcal{T}^{\mathrm{d2}}\setminus \{T\} \\
    \label{eq:KKT_3b_d2}
    &-\underline{\nu}_T + \overline{\nu}_T +\rho_T = 0 \\
    \label{eq:KKT_4_d2} 
    & 0  \leq p_{gt} \perp \underline{\mu}_{gt} \geq 0 , & \forall g \in \mathcal{G}, t \in \mathcal{T}^{\mathrm{d2}}\\
    \label{eq:KKT_5_d2} 
    &0 \leq \overline{P}_{gt} -p_{gt} \perp \overline{\mu}_{gt} \geq 0 , & \forall g \in \mathcal{G}, t \in \mathcal{T}^{\mathrm{d2}}\\
    \label{eq:KKT_6_d2} 
    & 0 \leq d_{lt} \perp \underline{\chi}_{lt} \geq 0 , & \forall l \in \mathcal{L}, t \in \mathcal{T}^{\mathrm{d2}} \\
    \label{eq:KKT_7_d2} 
    & 0 \leq  \overline{D}_{lt}-d_{lt} \perp \overline{\chi}_{lt} \geq 0 , & \forall l \in \mathcal{L}, t \in \mathcal{T}^{\mathrm{d2}}\\
    \label{eq:KKT_8_d2} 
    & 0 \leq e_t  \perp \underline{\nu}_{t} \geq 0 , & \forall t \in \mathcal{T}^{\mathrm{d2}} \\
    \label{eq:KKT_9_d2} 
    & 0 \leq  \overline{E}-e_t \perp \overline{\nu}_{t} \geq 0 , & \forall t \in \mathcal{T}^{\mathrm{d2}}\\
    & e_{H+1} = E^{\mathrm{init}} + b_{H+1} \\
    & e_t = e_{t-1} + b_{t}, & \forall t \in\mathcal{T}^{\mathrm{d2}} \setminus \{H+1\} \\
    & \sum_{l\in\mathcal{L}}d_{lt} + b_{t} -\sum_{g\in\mathcal{G}}p_{gt}= 0 , & \forall t \in \mathcal{T}^{\mathrm{d2}} .
\end{align}
\end{subequations}

\section{Proofs}

\subsection{Proof of Lemma \ref{lem:equivprimal}}\label{ap:equivprimal}
\begin{proof}
The full-horizon and split-horizon problems have the same objective function. 
The optimal solution to the full-horizon problem $\mathbf{x}^*$ lies in the feasible space $\mathcal{F}'$ of the split-horizon problem, i.e., $\mathbf{x}^* \in \mathcal{F}'$, because it satisfies the additional constraint $e_H = E^{\mathrm{end}} = e_H^*$. Therefore, the optimal objective of the full-horizon problem is a lower bound for the split-horizon problem. 
The feasible space $\mathcal{F}'$ of the split-horizon problem with the added constraint is a subspace of the feasible space $\mathcal{F}$ of the full-horizon problem, i.e., $\mathcal{F}' \subset \mathcal{F}$. Thus, the optimal objective of the split-horizon problem is a lower bound for the full-horizon problem. This implies that the split- and full-horizon problems attain the same maximum. 

As any optimum $\mathbf{x}' \in \mathcal{F}$ and any optimum $\mathbf{x}^* \in \mathcal{F}'$, the sets of optimal primal solutions must be equal.

\end{proof}

\subsection{Proof of Lemma \ref{lem:admitdual}}\label{ap:admitdual}
\begin{proof}
As the problem is convex, the KKT conditions are necessary and sufficient. Therefore, any feasible point in the KKT conditions of the split-horizon problem is an optimal solution to it. 
The KKT conditions of the split-horizon problem only differ from those of the full-horizon problem by equations:
\begin{align}
    \label{eq:KKT_replace}
    &-\underline{\nu}_H + \overline{\nu}_H +\rho_H - \xi = 0 \\
    &e_H = {E}^{\mathrm{end}} \label{eq:fixed_lem}\,,
\end{align}
which replace the full-horizon KKT condition 
\begin{align}
    &-\underline{\nu}_H + \overline{\nu}_H +\rho_H - \rho_{H+1} = 0 \,. 
\end{align}
The assumption of perfect foresight implies that $E^{\mathrm{end}}  = e^*_H$. Therefore, KKT condition \eqref{eq:fixed_lem} holds for any optimal primal solution to the full-horizon problem. 
As for condition \eqref{eq:KKT_replace}, we can see that it is satisfied by the optimal solution to the full-horizon problem in case $\xi' = \rho_{H+1}^*$.

Therefore, any optimal solution to the full-horizon problem, augmented with $\xi' = \rho_{H+1}^*$, also satisfies the KKT conditions for the split-horizon problem. As the KKT conditions are sufficient for convex problems, this means that this solution is also an optimal solution to the split-horizon problem. 
\end{proof}
\end{appendices}

\section*{Declarations}

\paragraph{Funding} The work of L. Fr\"{o}lke, P. Pinson, and J. Kazempour was supported in part by the EMB3Rs project, funded by the European Commission Horizon 2020 program, Grant Agreement No. 847121.

\paragraph{Competing interests} The authors have no competing interests to declare that are relevant to the content of this article.


\bibliography{references}

\end{document}